\newcommand{\C}{\mathbb{C}}
\newcommand{\R}{\mathbb{R}}
\newcommand{\Z}{\mathbb{Z}}
\newtheorem{theorem}{Theorem}[section]
\newtheorem{remark}[theorem]{Remark}
\newtheorem{proposition}[theorem]{Proposition}
\newtheorem{lemma}[theorem]{Lemma}
\newtheorem{definition}[theorem]{Definition}
\newtheorem{corollary}[theorem]{Corollary}
\newtheorem{conjecture}[theorem]{Conjecture}
\newenvironment{proof}{{\bf Proof:}}{\hfill$\square$\vskip.5cm}
\newenvironment{proofof}{}{\hfill$\square$\vskip.5cm}
\newcommand{\spec}{\operatorname{spec}}
\newcommand{\OIP}{\Omega^{\text{\rm IP}}}
\newcommand{\ORW}{\Omega^{\text{\rm RW}}}
\newcommand{\gip}{\gamma^{\text{\rm IP}}}
\newcommand{\grw}{\gamma^{\text{\rm RW}}}
\newcommand{\gsep}{\gamma^{\text{\rm SEP}}}
\title{Asymptotics of the Spectral Gap for the Interchange Process on Large Hypercubes}
\author{Shannon Starr$^{1}$ and Matt Conomos$^{2}$\\
Department of Mathematics, University of Rochester\\ 
RC Box 270138,
Rochester, NY 14627\\[1pt]
${}^{1}$ {\tt sstarr at math dot rochester dot edu}\\
${}^{2}$ 
{\tt mconomos at mail dot rochester dot edu}
}
\date{20 September 2011}
\begin{document}

\onehalfspacing

\maketitle 

\begin{abstract}
We consider the interchange process (IP)  on the $d$-dimensional,
discrete hypercube of side-length $n$.
Specifically, we compare the spectral gap of the IP to the spectral
gap of the random walk (RW) on the same graph.
We prove that the two spectral gaps are asymptotically equivalent,
in the limit $n \to \infty$.
This result gives further supporting evidence for a conjecture of 
Aldous, that the spectral gap of the IP equals the spectral gap of the RW
on all finite graphs.
Our proof is based on an argument invented by Handjani and Jungreis, who proved Aldous's
conjecture for all trees.

\vspace{8pt}
\noindent
{\small \bf Keywords:} Spectral gap, interchange process, random walk, Aldous's conjecture, relaxation time.
\vskip .2 cm
\noindent
{\small \bf MCS numbers:} 82C22; 60K35
\end{abstract}

\maketitle

\section{Introduction}

This paper is concerned with a certain conjecture of Aldous regarding the interchange process (IP) and the random walk (RW)
on finite graphs.
The IP is related to the RW, and can be thought of as a graphical representation for the RW.
It is the process that Liggett calls the ``stirring process'' in Section VIII.4 of his monograph, \cite{Liggett}.
The difference between the IP and the RW is this: 
while for the RW you have one particle moving about the vertices of the graph,
in the IP there are many more particles.
Specifically, there is one particle at each vertex; all particles are distinguishable;
and at random times particles at two endpoints of an edge
interchange their positions.
So the IP is really a random walk on permutations.
The stirring process also gives a graphical representation for the symmetric exclusion process (SEP).

For a fixed graph, $G$, let us refer to the Markov generator of the IP and RW by the symbols
$\OIP(G)$ and $\ORW(G)$, respectively.
The IP is a richer stochastic process than the RW, which is reflected in the eigenvalues:
$\spec(\ORW(G)) \subseteq \spec(\OIP(G))$.
One important quantity derived from the eigenvalues is the spectral gap, which is the distance between
the two largest eigenvalues of the Markov generator.
The spectral gap equals the reciprocal of the ``relaxation time'',
and it is related to the mixing time.
See, for example,
Chapter 8 of Aldous and Fill's book \cite{AldousFill} or Section 7.2 of Peres's notes \cite{Peres},
or Diaconis's review article about the cut-off phenomenon \cite{Diaconis}.

Let us refer to $\gip(G)$ and $\grw(G)$
as the spectral gap for the IP and RW, respectively. 
Since $\OIP(G)$ has more eigenvalues than $\ORW(G)$, 
and since the top eigenvalues of both are the same (they equal $0$ since we consider the continuous-time versions of the models)
we know that $\gip(G) \leq \grw(G)$.
Naively, we would expect that there is strict inequality.
But Aldous conjectured that $\gip(G)$ always equals $\grw(G)$.
Aldous stated this conjecture precisely in his book with Fill \cite{AldousFill} (it is Open Problem 29 in Chapter 14, Section 5)  and it is also listed on a page of open problems
on his website (\url{http://www.stat.berkeley.edu/~aldous/Research/OP/index.html}).

Let us say that $G$ is an ``Aldous graph'' if the condition $\gip(G) = \grw(G)$ is verified.
Many special families of graphs have been proved to be Aldous graphs, of which the most general family is trees.
Specifically, Handjani and Jungreis proved that all trees are Aldous graphs in \cite{HandjaniJungreis}.
For us, this is a key result in the literature.
We will use their arguments in a fundamental way.

We are interested in discrete hypercubes $R^d_n = \{(x_1,\dots,x_d) \in \Z^d\, :\, 1\leq x_1,\dots,x_d\leq n\}$.
Our result for these graphs will be something weaker than Aldous's conjecture, but related to it.
Let us start by recalling an important result which pre-dates Aldous's conjecture.
This is Lu and Yau's bound for the spectral gap of the SEP, which we will denote $\gsep(R^d_n)$  \cite{LuYau}.
Using a general method, which is now known as ``Yau's martingale method'',
they proved that $\gsep(R^d_n) = O(n^{-2})$.
This is also the known decay rate of $\grw(R^d_n)$.

Yau's martingale method is very important, and much more general than this one example suggests.
It applies to many models other than the SEP.
In particular it does not require any symmetry to apply, although the SEP and IP
have a lot of symmetries, themselves.
It also gives bounds on the Logarithmic-Sobolev-Inequality-constant, which is usually harder
to bound than the spectral gap.
The Logarithmic-Sobolev-Inequality-constant is important in mathematical physics and probability,
and is closely related
to the mixing time. (See Chapter 8 of \cite{AldousFill}.)

Presumably Yau's martingale method also applies to the IP to prove that $\gip(R^d_n) = O(n^{-2})$.
Therefore, one would conclude that 
$$
\liminf_{n\to \infty} \gip(R^d_n)/\grw(R^d_n)\, >\, 0\, .
$$
Aldous's conjecture  would imply that $\gip(R^d_n)/\grw(R^d_n) = 1$ for all $n$.
Therefore, while Lu and Yau's result is not as strong as Aldous's conjecture, it 
is related.

Our main result is the following:
\begin{theorem}
\label{thm:main}
For every dimension, $d\geq 1$,
$$
\lim_{n \to \infty} \gip(R^d_n)/\grw(R^d_n)\, =\, 1\, .
$$
\end{theorem}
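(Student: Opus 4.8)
The plan is to fix the dimension $d$ and establish the matching lower bound $\gip(R^d_n)\geq(1-o(1))\,\grw(R^d_n)$ as $n\to\infty$; the inequality $\gip(R^d_n)\leq\grw(R^d_n)$ is the general fact recalled above. Two preliminary remarks organize everything. First, the random walk gap is dimension-free: $R^d_n$ is the Cartesian product of $d$ copies of the path $R^1_n$, the random walk generator tensorizes over the factors, and hence $\grw(R^d_n)=\grw(R^1_n)$, an explicit $\Theta(n^{-2})$ quantity not depending on $d$. Second, $R^1_n$ is a path and therefore a tree, so by the Handjani--Jungreis theorem $\gip(R^1_n)=\grw(R^1_n)$. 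These furnish the base case of an induction on $d$.

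For the inductive step, view $R^d_n=R^1_n\,\square\,R^{d-1}_n$, with vertices $\{1,\dots,n\}\times W$, $W=V(R^{d-1}_n)$, $N=n^d$, and split the edges into the $n^{d-1}$ \emph{fibres} (paths obtained by fixing the $W$-coordinate) and the $n$ \emph{slices} (copies of $R^{d-1}_n$ obtained by fixing the first coordinate). Write $-\OIP(R^d_n)=B+C$, where $B$ sums the negated stirring generators of the fibres and $C$ those of the slices; both $B$ and $C$ are positive semidefinite on $\ell^2(S_N)$. The fibres are disjoint paths, so Handjani--Jungreis gives that the smallest nonzero eigenvalue of $B$ equals $\grw(R^1_n)=:g$; the slices are disjoint copies of $R^{d-1}_n$, so the inductive hypothesis gives that the smallest nonzero eigenvalue of $C$ is at least $(1-o(1))g$, using $\grw(R^{d-1}_n)=\grw(R^1_n)=g$. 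The kernels of $B$ and $C$ are the functions on $S_N$ invariant under stirring within all fibres, respectively all slices; fibre- and slice-transpositions together generate $\operatorname{Sym}(V)$, so $\ker B\cap\ker C$ is the line of constants, and therefore $\gip(R^d_n)$ is exactly the smallest nonzero eigenvalue of $B+C$. The theorem is thus reduced to the estimate $\langle\phi,(B+C)\phi\rangle\geq(1-o(1))\,g\,\|\phi\|^2$ for every $\phi$ orthogonal to the constants.

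This estimate carries the real content, and it is more delicate than it looks. Combining the bounds on $B$ and $C$ only controls the components of $\phi$ transverse to $\ker B$ and to $\ker C$; a function close to both kernels escapes, and a crude ``two subspaces at a fixed angle'' argument would give only $\gip\geq c\,g$ with a constant $c<1$. To reach $c=1$ I would use the decomposition of $\ell^2(S_N)$ into isotypic components for the particle-relabelling action of $\operatorname{Sym}(N)$ --- equivalently, into the symmetric-exclusion sectors with $1,2,\dots$ particles and the higher multi-type sectors. On the standard-representation sector $B+C$ acts as the random walk Laplacian and already has gap exactly $g$, so there is no slack: one must show that no eigenfunction supported in a higher sector has eigenvalue below $(1-o(1))g$. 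For this I would adapt the Handjani--Jungreis peeling --- on a tree one deletes a leaf and is left with a smaller tree; here one works in the larger category of \emph{boxes} $\prod_j P_{n_j}$, closed under deleting a boundary slice, the base case again being a path, where Handjani--Jungreis applies exactly --- and track how a hypothetical low eigenfunction localizes under successive deletions until it is excluded by the inductive hypothesis, the accumulated error being $o(1)$ for fixed $d$.

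The step I expect to be the main obstacle is making this peeling quantitative on a graph with no leaves. Deleting a leaf from a tree is essentially free: the removed vertex touches only one other vertex and the remainder is again a tree. Deleting a slice (or a fibre) from $R^d_n$ is not: it is glued to the rest of the graph along $\Theta(n^{d-1})$ edges, so the induction must carry the correlations between the removed piece and its neighbourhood and show that they perturb the relevant Dirichlet form by only a lower-order amount. Verifying that these correlations are negligible uniformly in $n$ --- and in particular that the total error is genuinely $o(\grw(R^d_n))=o(n^{-2})$ rather than $O(n^{-2})$ with an unfavourable constant --- is the heart of the matter, and is presumably the place where the specific product-of-paths geometry of the hypercube, rather than that of an arbitrary graph, must be exploited.
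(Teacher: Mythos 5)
Your reductions are fine as far as they go (the tensorization $\grw(R^d_n)=\grw(R^1_n)$, the base case via Handjani--Jungreis on paths, the splitting $-\OIP(R^d_n)=B+C$ into fibre and slice generators, the identification of $\ker B\cap\ker C$ with the constants, and the lower bounds on the nonzero spectra of $B$ and $C$ separately), but the proof stops exactly where the theorem begins. Knowing that $B$ and $C$ each have gap $\approx g$ on the orthogonal complements of their own kernels, and that the kernels intersect only in the constants, yields at best $\gip(R^d_n)\geq c\,g$ with a constant $c<1$ governed by the angle between $\ker B$ and $\ker C$ --- you say this yourself --- and the two devices you offer to recover the sharp constant (restriction to higher isotypic sectors, and a ``peeling'' of slices modelled on the Handjani--Jungreis leaf-deletion) are stated as intentions, not arguments. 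The slice-peeling in particular has no analogue of the leaf step: the Handjani--Jungreis induction removes a \emph{single vertex}, and its proof uses that structure essentially; a deleted slice is attached along $\Theta(n^{d-1})$ edges, and you give no estimate showing the resulting error in the Dirichlet form is $o(n^{-2})$. Since you explicitly flag this as the unresolved ``heart of the matter,'' the proposal contains a genuine gap at its central analytic step rather than a complete proof.

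It is worth seeing how the paper sidesteps the obstacle entirely: it never adds or deletes slices. It interpolates between $R^d_n$ and $R^d_{n+1}$ by adding one vertex at a time through $R^d_n$-traceable sets $V_N$, so that the interchange-process content is carried entirely by the HJKN induction (Theorem \ref{thm:HJ}), upgraded in Corollary \ref{cor:mHJKN} to handle non-monotone gap sequences via local minima, and packaged asymptotically in Proposition \ref{prop:power}. The only new estimate then needed is on the \emph{random walk} gaps of the intermediate graphs $V_N$ --- an $N$-dimensional problem, not an $N!$-dimensional one --- and this is supplied by the discrete Trace theorem (Theorem \ref{thm:trace}) and Lemma \ref{lem:lip}, giving $\grw(V_N)\sim\pi^2N^{-2/d}$ and hence $\gip(R^d_n)\sim\grw(R^d_n)$. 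If you want to salvage your dimension-induction scheme, the missing sharp comparison of $B+C$ against $g$ on the orthogonal complement of the constants would itself require a tool of comparable strength to the HJKN theorem (or to the later Caputo--Liggett--T\'oth resolution of Aldous's conjecture), so the more economical repair is to adopt the vertex-by-vertex growth and prove the random-walk estimates for the intermediate graphs.
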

This is somewhat akin to proving Aldous's conjecture asymptotically, in the limit of large hypercubes.
We view this as additional evidence in favor of Aldous's conjecture.

\begin{remark}
Shortly after our preprint appeared on the arXiv, Ben Morris independently discovered a similar argument to prove a similar result.
There are some differences in the two papers which the interested reader may see by consulting \cite{Morris}.
\end{remark}

\subsection{Outline for proof}

\subsubsection{The HJKN method}

We can state the following ideas of the proof.
Our result mainly relies on a theorem of Handjani and Jungreis, from \cite{HandjaniJungreis}. 
They invented an argument to show that an increasing sequence $G_2,G_3,\dots$ 
are all Aldous graphs, if one can show that the spectral gaps of the random walks are 
non-increasing: $\grw(G_2) \geq \grw(G_3) \geq \dots$. 
Here we suppose that the number of vertices in $G_n$ is $n$, and we exclude the single-vertex graph
because the spectral gaps do not exist.
This induction argument was also independently re-discovered by Koma and Nachtergaele in a different context \cite{KomaNachtergaele}.
Therefore, we refer to the argument as the HJKN method, to acknowledge all four researchers.
This method is quite useful.
As Handjani and Jungreis say, ``Usually, the gap for the random walk process is much easier to 
compute or bound than that for the interchange process."
In fact $\ORW(G)$ is one version of the discrete Laplacian for the graph $G$.
It is the one most closely related to the Neumann Laplacian.

A problem arises if, for a sequence of graphs, it is not the case that $\grw(G_N)$ is non-increasing with $N$.
For example, for some families of graphs, it may happen that $\grw(G_N)$ decreases on average,
but may have some steps where it increases.
Let us call $G_N$ a ``local minimum'' if $\grw(G_N) = \min \{ \grw(G_2),\dots,\grw(G_N)\}$.
Then our main contribution to the HJKN method is to show that it implies that each local minimum is an Aldous graph.
Also, as is intuitively clear, $\gip(G_k) \geq \gip(G_N) = \grw(G_N)$ for each $k\leq N$.

Let us assume that $\grw(G_N) \to 0$ as $N \to \infty$.
Then we can enumerate the local minima as $G_{N_1}, G_{N_2}, \dots$,
with $N_1<N_2<\dots$.
Defining 
$$
N(k)\, :=\, \min \{N_i\, :\, N_i \geq k\}\, ,
$$
we have lower bounds $\gip(G_k) \geq \grw(G_{N(k)})$.
Of course, also $\gip(G_k) \leq \grw(G_k)$.

If we know that $\grw(G_N) \sim C N^{-p}$, as $N  \to \infty$, for some $p>0$,
and $C<\infty$, then it is easy to conclude that $N(k) \sim k$, as $k \to \infty$.
Then the upper and lower bounds imply that $\gip(G_N) \sim C N^{-p}$, as well.
This allows us to conclude that $\gip(G_N) \sim \grw(G_N)$, as $N \to \infty$.
This is what we want to prove for hypercubes of side-length $n$, in the limit $n \to \infty$.

\subsubsection{Asymptotics of the random walk spectral gap on ``approximate'' hypercubes}

Fix the dimension, $d\in \{1,2,\dots\}$.
Using the HJKN method, the proof of our main result boils down to 
constructing a sequence of graphs $G_2,G_3,\dots$, such that
\begin{enumerate}
\item
$G_{n^d} = R^d_n$ for each integer $n\geq 2$, and
\item $\grw(G_N) \sim C N^{-p}$,
for some $C<\infty$ and $p>0$.
\end{enumerate}
It is easy to see that $\grw(R^d_n) \sim \pi^2 n^{-2}$, because one can completely diagonalize
$\ORW(R^d_n)$, using the Fourier series and the ``method of images''.
Therefore, obtaining such a sequence is certainly feasible, with $C = \pi^2$ and $p=2/d$.

For the actual proof, 
there is a technical problem that needs to be solved: controlling the gap for the random walk
on graphs $G_N$ for $N$ between hypercubes, i.e., $n^d < N < (n+1)^d$.
Since such graphs have ``extra vertices'' one cannot diagonalize them exactly.
It is still easy to produce variational upper bounds, using the variational principle,
to prove that $\limsup_{N \to \infty} N^{2/d} \grw(G_N) \leq \pi^2$.
But, as is usually the case, obtaining lower bounds on the spectral gaps requires more work.
Some conditions are required for the graphs. 
For example, at the very least such graphs must be assumed to be connected, otherwise the gap equals $0$.

The key technical lemma for this part of the proof comes from a discrete version of the Trace theorem.
Recall that the Trace theorem applies to an open domain $U \subset\subset \R^d$,
such that $\overline{U}$ is compact,
and such that $\partial U := \overline{U} \setminus U$ is piecewise $C^1$.
The theorem states that there is a bounded linear transformation from $H^1(U)$ to $L^2(\partial U)$, 
which is just the restriction map in the case of functions in $C^2(\overline{U})$.
(See Theorem 1 in Section 5.5 of Evans's textbook on PDE's, \cite{Evans}.)
It is easiest to understand this theorem when $d=1$, and considering a function $u : [0,1] \to \R$
which is in $C^2([0,1])$.
Then obviously,
$|u(1)| = \left|\int_0^1 \frac{d}{dx}[x u(x)]\, dx\right|
\leq 2 (\|u\|_{L^2} + \|u'\|_{L^2})$, and a similar argument works to bound $u(0)$.
Thus the restriction map is bounded.

For the graph $G_N$, with $R_n^d\subseteq G_N\subseteq R_{n+1}^d$, the discrete Laplacian is,
to leading order, equal to $n^2 \ORW(G_N)$.
The discrete $H^1$ norm is then $\|f\|^2 + n^2 \langle f, \ORW(G_N) f \rangle$.
The discrete version of the Trace theorem should imply that, for an appropriate
notion of $\partial G_N$, it is the case that 
$$
\sum\nolimits_{x \in \partial G_N} |f(x)|^2\, \leq\, a_d\, n^{-1} \|f\|^2 + b_d\, n\, \langle f, \ORW(G_N) f\rangle\, ,
$$
where $a_d$ and $b_d$ are finite constants (depending on $d$ but not on $n$).
Then one can effectively ``prune'' the extra vertices  to reduce the graph back to $R^d_n$.

The notion of $\partial G_N$ we use is $G_N \setminus R^d_n$.
In one dimension, the discrete Trace theorem is proved just as for the usual Trace theorem,
except using the finite difference operator in place of the derivative.
This gives the constants $a_1=b_1=2$.
When $d$ is greater than 1, we can reduce back to the 1-dimensional case by making conditions on
the graph. 
We partition $R^d_{n+1} \setminus R^d_n$ into $d$ ``faces'':
$S^d_{n,k} = \{(x_1,\dots,x_d)\, :\, x_k=n+1\ \text{and}\ x_{k+1},\dots,x_d \leq n\}$,
for $k=1,\dots,d$.
We require that for each point, $(x_1,\dots,x_d) \in S^d_{n,k}\cap G_N$, the entire line,
$$
K^d_{n,k}(x)\, :=\, \{(y_1,\dots,y_d)\, :\, y_i=x_i\ \text{for $i\neq k$, and}\ y_k\leq n+1\}\, ,
$$
lies in $G_N$.
Then we can use the 1-dimensional discrete Trace theorem on the subgraph $K^d_{n,k}(x)$.
This results in the inequality above with $a_d = 2d$ and $b_d=2$.
As the reader can easily check, the condition we require does imply that $G_N$ is connected, 
so that $\grw(G_N)>0$, at least.

Finally, we want to point-out that in this part of the proof we are guided by an
important paper of Ramirez, Rider and Virag on
random matrix theory \cite{RamirezRiderVirag}.
Among various important contributions of that paper, they showed how to use Sobolev inequalities
to prove strong types of convergence of finite-difference type operators to differential operators.
In their case, they proved a compactness result similar to the Rellich-Kondrachov theorem.
In our case, we needed a discrete Trace theorem.

\section{Set-up}

To begin with, we consider a slightly more general setting than the one described
in the introduction.
This generality will be useful later, in the proofs.

For each $N\geq 1$, define $X_N$ to be the set $\{1,2,\dots,N\}$.
Let $S_N$ be the set of permutations of $N$, which we denote as $\pi = (\pi_1,\dots,\pi_N)$,
where $\{\pi_1,\dots,\pi_N\} = X_N$.

Let $\ell^2(X_N)$ and $\ell^2(S_N)$ be real, finite-dimensional Hilbert spaces, defined as follows.
As vector spaces, let $\ell^2(X_N)$ be the set 
of all functions $f : X_N \to \C$
and let $\ell^2(S_N)$ be the set of all functions $f:S_N \to \C$.
Also, take the standard $\ell^2$ inner-products:
$$
\langle f,g \rangle_{\ell^2(X_N)}\, =\, \sum\nolimits_{i \in X_N} f(i) \overline{g(i)}\quad \text{and}\quad
\langle f,g \rangle_{\ell^2(S_N)}\, =\, \sum\nolimits_{\pi \in S_N} f(\pi) \overline{g(\pi)}\, .
$$
Let $\text{\rm U}(X_N)$ and $\text{\rm U}(S_N)$ refer to the groups of all unitary operators
on $\ell^2(X_N)$ and $\ell^2(S_N)$, respectively.
There are unitary group representations $U_N : S_N \to \text{\rm U}(X_N)$
and $V_N : S_N \to \text{\rm U}(S_N)$ defined as $U_N(\pi) f(i) = f((\pi^{-1})_i)$
and $V_N(\pi) f(\pi') = f(\pi^{-1} \pi')$.

In general, let us write $1_E$ for the indicator function of $E$. 
Also, let us write $1$ for the constant function.

For each $i \in X_N$, there is a surjection $\phi_{N,i} : S_N \to X_N$ given by $\phi_{N,i}(\pi) = \pi_i$.
One can define the linear transformation $T_{N,i} : \ell^2(X_N) \to \ell^2(S_N)$ given by
$T_{N,i} f = f \circ \phi_{N,i}$; i.e., $T_{N,i} f(\pi) = f(\pi_i)$. Then
$$
\big[V_N(\pi) [T_{N,i} f]\big](\pi')\, 
=\, [T_{N,i} f](\pi^{-1}\pi')\,
=\, f((\pi^{-1})_{\pi'_i})\,
=\, [U_N(\pi) f](\pi'_i)\,
=\, \big[T_{N,i} [U_N(\pi) f]\big](\pi')\, .
$$
Therefore, $V_N(\pi) T_{N,i} = T_{N,i} U_N(\pi)$.
In other words, $T_{N,i}$ intertwines the two representations of $S_N$.
Also note that 
$$
(T_{N,i})^* f(j)\, =\, \left\langle f, 1_{\phi_{N,i}^{-1}\{j\}} \right\rangle_{\ell^2(S_N)}\, .
$$
From this it is easy to see that $(T_{N,i})^* T_{N,i} = (N-1)! I_{X_N}$, where $I_{X_N}$
denotes the identity operator on $\ell^2(X_N)$.
Among other things, this implies that $T_{N,i}$ is injective.

\begin{lemma}
\label{lem:contraction}
For every choice of $z_{\pi} \in \C$, for $\pi \in S_N$,
$$
\spec\left(\sum\nolimits_{\pi \in S_N} z_\pi U_N(\pi)\right)\, \subseteq\, 
\spec\left(\sum\nolimits_{\pi \in S_N} z_\pi V_N(\pi)\right)\, .
$$
\end{lemma}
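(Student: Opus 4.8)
The plan is to exploit the intertwining relation $V_N(\pi)\,T_{N,i} = T_{N,i}\,U_N(\pi)$ that was already established in the excerpt, together with the injectivity of $T_{N,i}$, which follows from $(T_{N,i})^* T_{N,i} = (N-1)!\,I_{X_N}$. First I would fix any index $i \in X_N$ and set
$$
A\, :=\, \sum\nolimits_{\pi \in S_N} z_\pi U_N(\pi)\, , \qquad
B\, :=\, \sum\nolimits_{\pi \in S_N} z_\pi V_N(\pi)\, .
$$
Multiplying the intertwining identity by $z_\pi$ and summing over $\pi \in S_N$ gives $B\,T_{N,i} = T_{N,i}\,A$, so the range of $T_{N,i}$ is a $B$-invariant subspace of $\ell^2(S_N)$ on which $B$ is conjugate (via $T_{N,i}$) to $A$.

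Next I would recall that, since $\ell^2(X_N)$ and $\ell^2(S_N)$ are finite-dimensional, the spectrum of any operator is precisely its set of eigenvalues (no appeal to normality or self-adjointness is needed here, which matters because the $z_\pi$ are arbitrary complex numbers). So let $\lambda \in \spec(A)$ and pick a nonzero $v \in \ell^2(X_N)$ with $Av = \lambda v$. Then
$$
B\,(T_{N,i} v)\, =\, T_{N,i}(A v)\, =\, \lambda\,(T_{N,i} v)\, ,
$$
and $T_{N,i} v \neq 0$ because $T_{N,i}$ is injective. Hence $\lambda$ is an eigenvalue of $B$, i.e.\ $\lambda \in \spec(B)$. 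Since $\lambda$ was arbitrary, this yields $\spec(A) \subseteq \spec(B)$, which is the claim.

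There is essentially no serious obstacle here: the whole content is the intertwining relation and injectivity, both of which are in hand. The only point that warrants a sentence of care is the reduction ``spectrum equals eigenvalue set,'' valid because we are in finite dimensions; this is what lets the argument go through even though $A$ and $B$ need not be normal for general $z_\pi \in \C$. If one wanted a slightly more structural phrasing, one could instead note that $T_{N,i}$ being injective makes $P := T_{N,i}\big((T_{N,i})^*T_{N,i}\big)^{-1}(T_{N,i})^* = (N-1)!^{-1} T_{N,i}(T_{N,i})^*$ an orthogonal projection onto $\operatorname{ran}(T_{N,i})$ that commutes with $B$, exhibiting $\operatorname{ran}(T_{N,i})$ as a reducing subspace, but the direct eigenvector argument above is the most economical route.
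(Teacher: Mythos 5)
Your proof is correct and follows essentially the same route as the paper: fix $i$, sum the intertwining relation to get $BT_{N,i}=T_{N,i}A$, and push an eigenvector of $A$ through the injective map $T_{N,i}$ to produce an eigenvector of $B$. The remark that in finite dimensions the spectrum is the eigenvalue set is a fine (implicit in the paper) clarification, and the alternative projection phrasing is unnecessary but harmless.
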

\begin{proof}
Let $A = \sum_{\pi \in S_N} z_{\pi} U_N(\pi)$ and let $B = \sum_{\pi in S_N} z_{\pi} V_N(\pi)$.
Then if $\lambda \in \spec(A)$, there must be a function $f \in \ell^2(X_N) \setminus \{0\}$ such that $A f = \lambda f$.
Since $T_{N,i}$ is injective, we know that $T_{N,i} f$ is in $\ell^2(S_N) \setminus \{0\}$.
But since $T_{N,i}$ is an intertwiner, we have $B T_{N,i} f = T_{N,i} A f = \lambda T_{N,i} f$. So $\lambda \in \spec(B)$.
\end{proof}

Given any $\pi \in S_N$, let us define $\Delta_{N,\pi} : \ell^2(X_N) \to \ell^2(X_N)$
and $\widehat{\Delta}_{N,\pi} : \ell^2(S_N) \to \ell^2(S_N)$ as
$$
\Delta_{N,\pi}\, =\, -I_{X_N} + \frac{1}{2} U_N(\pi) + \frac{1}{2} U_N(\pi^{-1})\quad
\text{and}\quad
\widehat{\Delta}_{N,\pi}\, =\, -I_{S_N} + \frac{1}{2} V_N(\pi) + \frac{1}{2} V_N(\pi^{-1})\, ,
$$
where $I_{X_N}$ and $I_{S_N}$ represent the identity operators on $\ell^2(X_N)$
and $\ell^2(S_N)$, respectively.
These operators can be written as 
$$
\Delta_{N,\pi}\, =\, - \frac{1}{2} \big(I_{X_N}- U_N(\pi)\big)^* \big(I_{X_N}- U_N(\pi)\big)\quad \text{and}\quad
\widehat{\Delta}_{N,\pi}\, =\, - \frac{1}{2} \big(I_{S_N} - V_N(\pi)\big)^* \big(I_{S_N} - V_N(\pi)\big) \, .
$$
Therefore, they are both negative semi-definite operators.
I.e., $\langle f,\Delta_{N,\pi} f\rangle_{\ell^2(X_N)} \leq 0$ for any $f \in \ell^2(X_N)$,
and $\langle f,\widehat{\Delta}_{N,\pi} f\rangle_{\ell^2(S_N)} \leq 0$ for any $f \in \ell^2(S_N)$.

Recall that, given a finite state space $X$, a Markov generator for a continuous-time Markov process
on $X$ is an operator, $\Omega : \ell^2(X) \to \ell^2(X)$, satisfying the following two conditions:
\begin{itemize}
\item $\Omega 1=0$; and
\item $\langle 1_{\{x\}}, \Omega 1_{\{y\}} \rangle \geq 0$ for any $x,y \in X$ with $x\neq y$.
\end{itemize}
It is also easy to see that $\Delta_{N,\pi}$ and $\widehat{\Delta}_{N,\pi}$
are Markov generators for continuous-time Markov processes on $X_N$ and $S_N$, respectively.
A Markov generator for a continuous-time Markov process which is self-adjoint is called a Markov generator
for a continuous-time, reversible Markov process.
Both $\Delta_{N,\pi}$ and $\widehat{\Delta}_{N,\pi}$ have this property.

The set of Markov generators for continuous-time, reversible Markov processes form a cone.
Therefore, if we have any nonnegative function, $r : S_N \to [0,\infty)$, we can define operators,
$$
\Delta_N(r)\, =\, \sum\nolimits_{\pi \in S_N} r(\pi) \Delta_{N,\pi}\quad \text{and}\quad
\widehat{\Delta}_N(r)\, =\, \sum\nolimits_{\pi \in S_N} r(\pi) \widehat{\Delta}_{N,\pi}\, ,
$$
which are both Markov generators for continuous-time, reversible Markov processes.
By Lemma \ref{lem:contraction}, $\spec(\Delta_N(r)) \subseteq \spec(\widehat{\Delta}_N(r))$.
The spectral gaps of these operators are defined as
\begin{equation}
\label{eq:gapRW}
\gamma(\Delta_N(r))\, =\, 
\min\{ \langle f,-\Delta_N(r) f\rangle_{\ell^2(X_N)}\, :\, f \in \ell^2(X_N)\, ,\ \|f\|_{\ell^2(X_N)} = 1\ \text{and}\ \langle f,1 \rangle_{\ell^2(X_N)} = 0\}
\end{equation}
and
\begin{equation}
\label{eq:gapIP}
\gamma(\widehat{\Delta}_N(r))\, =\, 
\min\{ \langle f,-\widehat{\Delta}_N(r) f\rangle_{\ell^2(S_N)}\, :\, f \in \ell^2(S_N)\, ,\ \|f\|_{\ell^2(S_N)} = 1\ \text{and}\ \langle f,1 \rangle_{\ell^2(S_N)} = 0\}\, .
\end{equation}
Since $T_{N,i} 1 = 1$ for all $i \in X_N$, the fact that $\spec(\Delta_N(r)) \subseteq \spec(\widehat{\Delta}_N(r))$
immediately implies that
$$
\gamma(\widehat{\Delta}_N(r))\, \leq\, \gamma(\Delta_N(r))\, .
$$

Let $X_{N,2} = \{\{i,j\}\, :\, 1\leq i<j\leq N\}$ be the set of pairs.
A function, $q : X_{N,2} \to [0,\infty)$, is called a ``rate function''.
Given a rate function, we let 
$$
\ORW_N(q)\, =\, \sum\nolimits_{\{i,j\} \in X_{N,2}} q(\{i,j\}) \Delta_{N,(i,j)}\quad \text{and}\quad
\OIP_N(q)\, =\, \sum\nolimits_{\{i,j\} \in X_{N,2}} q(\{i,j\}) \widehat{\Delta}_{N,(i,j)}\, ,
$$
where $(i,j) \in S_N$ is the standard transposition.
One may observe that for any function $r:S_N \to [0,\infty)$, we have
$$
\Delta_N(r)\, =\, \ORW_N(q)\quad \text{where}\quad
q(\{i,j\})\, =\, \frac{1}{2} \big[(T_{N,i})^*r(j) + (T_{N,j})^* r(i)\big]\, .
$$
We define $\grw_N(q)$ and $\gip_N(q)$ to be $\gamma(\ORW_N(q))$ and $\gamma(\OIP_N(q))$, respectively.
Of course, $\gip_N(q) \leq \grw_N(q)$ because $\ORW_N(q)$ and $\OIP_N(q)$ are special cases of the $\Delta_N(r)$ and $\widehat{\Delta}_N(r)$,
considered before.

In this context, Aldous's conjecture is the following.
\begin{conjecture}[Aldous's conjecture for the IP]
For every $N\geq 2$ and every $q:X_{N,2} \to [0,\infty)$,
$$
\gip_N(q)\, =\, \grw_N(q)\, .
$$
\end{conjecture}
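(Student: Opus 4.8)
\medskip
\noindent\textbf{A proposed route to Aldous's conjecture.}
By Lemma~\ref{lem:contraction} and the fact that $T_{N,i}$ intertwines $U_N$ with $V_N$, we already have $\gip_N(q)\leq\grw_N(q)$ for every rate function; so the conjecture is equivalent to the reverse inequality $\gip_N(q)\geq\grw_N(q)$, i.e.\ to the statement that every nonzero eigenvalue of $-\OIP_N(q)$ is at least $\grw_N(q)$. The plan is to extract this from the representation theory of $S_N$. Since $\OIP_N(q)=\sum_{\{i,j\}\in X_{N,2}}q(\{i,j\})\,\widehat{\Delta}_{N,(i,j)}$ is a linear combination of the operators $V_N(\pi)$, it commutes with the \emph{right} regular action of $S_N$ on $\ell^2(S_N)$; by Schur's lemma, $\spec(\OIP_N(q))$ is then the union, over the irreducible representations $\lambda$ of $S_N$ realized by unitary $\rho_\lambda$, of the spectra of the operators $A_\lambda(q):=\sum_{\{i,j\}\in X_{N,2}}q(\{i,j\})\bigl(-I+\rho_\lambda((i,j))\bigr)$, the eigenvalues from $\lambda$ appearing with multiplicity $\dim\lambda$. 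Each $-A_\lambda(q)$ is positive semi-definite; let $\mu_\lambda(q)$ be its smallest eigenvalue. Then $\mu_{(N)}(q)=0$ (the trivial representation, carried by the constant function), and the permutation representation of $S_N$ on $X_N$ is $(N)\oplus(N-1,1)$, so $\grw_N(q)=\mu_{(N-1,1)}(q)$, while removing the constant function gives $\gip_N(q)=\min_{\lambda\neq(N)}\mu_\lambda(q)$. Thus Aldous's conjecture is precisely the assertion
$$
\mu_{(N-1,1)}(q)\ \leq\ \mu_\lambda(q)\qquad\text{for all }\lambda\vdash N,\ \lambda\neq(N).
$$

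I would prove this by induction on $N$. The case $N=2$ is trivial, and the theorem of Handjani and Jungreis \cite{HandjaniJungreis} may be taken as an enlarged stock of base cases, since it settles the conjecture for all trees. For the inductive step, isolate the vertex $N$ and split $-\OIP_N(q)=-\OIP_N(q^\circ)+\Sigma_N$, where $q^\circ$ agrees with $q$ on pairs inside $\{1,\dots,N-1\}$ and vanishes on pairs meeting $N$, and $\Sigma_N=\sum_{i<N}q(\{i,N\})\bigl(I-V_N((i,N))\bigr)$ is the ``star'' at $N$. On a nontrivial block $\lambda\neq(N-1,1)$, the branching rule ($\rho_\lambda$ restricted to $S_{N-1}$ is the sum of the $\rho_\mu$ with $\mu$ obtained from $\lambda$ by deleting one box) lets the inductive hypothesis for $S_{N-1}$ bound the contribution of $-\OIP_N(q^\circ)$ from below. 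What remains is to absorb $\Sigma_N$; for the random walk this is automatic, because eliminating the value at the vertex $N$ --- the Schur complement of the graph Laplacian, equivalently the star--triangle transform --- replaces the star at $N$ by the weighted clique on $\{1,\dots,N-1\}$ with conductances $q(\{i,N\})q(\{j,N\})/\sum_{\ell<N}q(\{\ell,N\})$, without changing the relevant Dirichlet form.

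The step that I expect to be the genuine obstacle --- and which carries essentially all the difficulty --- is the operator inequality legitimizing that replacement for the interchange process, the \emph{octopus inequality}: for a vertex $v$ with neighbours $v_1,\dots,v_k$ and weights $c_1,\dots,c_k\geq0$, the element
$$
\sum_{i=1}^{k}c_i\bigl(I-V((v,v_i))\bigr)\ -\ \frac{1}{c_1+\cdots+c_k}\sum_{1\leq i<j\leq k}c_ic_j\,\bigl(I-V((v_i,v_j))\bigr)
$$
should act as a positive semi-definite operator on the group algebra $\C[S_{k+1}]$ of the symmetric group on $\{v,v_1,\dots,v_k\}$, with $V$ its left regular representation. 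Granting this, $\Sigma_N$ dominates $\bigl(\sum_{i<N}q(\{i,N\})\bigr)^{-1}$ times the octopus-weighted clique; that clique merges with $-\OIP_N(q^\circ)$, block by block, into the interchange generator of a graph on $N-1$ vertices, which matches the Schur-complement picture for the random walk, and the inductive hypothesis on $N-1$ vertices then delivers $\mu_\lambda(q)\geq\mu_{(N-1,1)}(q)$. But the octopus inequality is not a soft fact: it follows neither from Schur's lemma nor from any positivity already present in the cone of reversible Markov generators, and the two natural lines of attack --- a rearrangement argument inside $\C[S_{k+1}]$, or an irreducible-by-irreducible verification of the matrix inequality --- each require substantial work. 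Once it is in place, the representation-theoretic reduction above and the network-reduction bookkeeping around it are comparatively routine.
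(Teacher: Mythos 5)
You should first note what the paper actually does with this statement: it is recorded as a \emph{conjecture} (Aldous's conjecture), and the paper never proves it --- its Theorem \ref{thm:main} is only the asymptotic statement $\gip(R^d_n)/\grw(R^d_n)\to 1$ for hypercubes, obtained via the HJKN method, Corollary \ref{cor:mHJKN}, and the discrete Trace theorem. So there is no proof in the paper to compare yours against, and your text should be judged as a free-standing argument for the full conjecture. Your preliminary reductions are correct: $\gip_N(q)\leq\grw_N(q)$ follows from Lemma \ref{lem:contraction}; $\OIP_N(q)$ lies in the left group algebra and hence commutes with the right regular action, so its spectrum is the union over irreducibles of the spectra of the operators $A_\lambda(q)$; the permutation module on $X_N$ is trivial plus standard, so $\grw_N(q)=\mu_{(N-1,1)}(q)$ and the conjecture becomes $\mu_{(N-1,1)}(q)\leq\mu_\lambda(q)$ for all nontrivial $\lambda\vdash N$. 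This reformulation is standard and sound.

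The genuine gap is that everything after that is a plan, not a proof, and the one step you yourself flag --- the octopus inequality --- is precisely where all of the content lives. Positive semi-definiteness of $\sum_i c_i\bigl(I-V((v,v_i))\bigr)-\bigl(\sum_i c_i\bigr)^{-1}\sum_{i<j}c_ic_j\bigl(I-V((v_i,v_j))\bigr)$ in $\C[S_{k+1}]$ does not follow from any positivity already available here (the operators $\widehat{\Delta}_{N,\pi}$ being negative semi-definite, Lemma \ref{lem:positive}, or Schur's lemma), and asserting it ``should'' hold leaves the argument without its load-bearing lemma. Moreover, even granting it, the surrounding induction is sketched too loosely to check: the Schur-complement rates on $\{1,\dots,N-1\}$ depend on all of $q$, the ``block by block'' use of the branching rule has to be reconciled with the fact that the reduced generator is an $S_{N-1}$-object while $\Sigma_N$ is not, and the claim that the octopus-weighted clique ``merges'' with $-\OIP_N(q^\circ)$ into the interchange generator of the reduced network needs an actual operator inequality, not an analogy with the electrical star--triangle identity. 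For what it is worth, the route you describe is essentially the one by which Caputo, Liggett and Richthammer did eventually prove Aldous's conjecture (independently of, and after, the argument in this paper), and their proof of the octopus inequality is a substantial piece of work in its own right; so your strategy is viable, but as submitted it is an outline with the decisive inequality unproven, whereas this paper deliberately avoids that difficulty and settles only the asymptotic hypercube case.
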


\begin{remark}
If $N=1$ then there is only one eigenvalue of both the RW and IP, counting multiplicity.
So there is no spectral gap in that case.
For this reason, we never consider the case $N=1$.
Also, when $N=2$, both operators are $2\times 2$ matrices, which coincide.
So $\gip_2(q) = \grw_2(q) = 2 q(\{1,2\})$.
\end{remark}

\begin{definition}
For $N\geq 2$,
we say that $q : X_{N,2} \to [0,\infty)$ satisfies ``Aldous's condition'' if $\gip_N(q)=\grw_N(q)$.
\end{definition}

Consider a finite graph $G=(V,E)$, where $V$ is the vertex set and $E$ is the edge set.
We let $N = |V|$.
We consider edges to be unordered pairs $\{x,y\}$, and we do not allow ``loops''. 
So $E$ can be any subset of $\{\{x,y\}\, :\, x,y \in V\, ,\ x\neq y\}$.
Consider any enumeration of the vertices: $V = \{x_1,x_2,\dots,x_N\}$.
Then, there is a canonical choice of rate function, $q_G : X_{N,2} \to [0,\infty)$, associated to $G$:
$$
q_G(\{i,j\})\, =\, 1_{E}(\{x_i,x_j\})\, .
$$
We define $\ORW(G)$ and $\OIP(G)$ to be $\ORW_N(q)$ and $\OIP_N(q)$ for this $q$.
We also define $\grw(G)$ and $\grw(G)$ to be $\gamma(\ORW(G))$ and $\gamma(\OIP(G))$, respectively.

\begin{definition}
We say that $G$ is an ``Aldous graph'' if $q_G$ satisfies Aldous's condition.
\end{definition}

Let $d\in\{1,2,3,\dots\}$ be chosen as the dimension.
Consider $\Z^d$ to be the infinite graph with vertex set $\Z^d = \{(x_1,\dots,x_d)\, :\, x_1,\dots,x_d \in \Z\}$,
such that $\{x,y\}$ is an edge if and only if $\|x-y\|_1=1$, where $\|x\|_1 = |x_1| + \dots + |x_d|$.
This is the usual graph structure on $\Z^d$ making it the simple (hyper)cubic lattice.
Considering $\Z^d$ as the vertex set, given any finite subset $V \subset \Z^d$, let us consider $G$ to be the induced subgraph.
So $E = \{\{x,y\}\, :\, x,y \in V\, ,\ \|x-y\|_1=1\}$.
We will then simply write $V$, rather than $(V,E)$.
So we refer to $\grw(V)$ and $\gip(V)$ instead of $\grw(G)$ and $\gip(G)$.
In particular, we consider the discrete, $d$-dimensional hypercube of side-length $n$:
$$
R^d_n\, =\, \{(x_1,\dots,x_d) \in \Z^d\, :\, 1\leq x_1,\dots,x_d \leq n\}\, .
$$
With these preliminaries completed, we have all the necessary definitions to understand our main result:

\smallskip
\noindent
{\bf Theorem \ref{thm:main}}
{\it
For every dimension, $d\geq 1$,
$$
\lim_{n \to \infty} \gip(R^d_n)/\grw(R^d_n)\, =\, 1\, .
$$
}\\[-5pt]
As we mentioned in the introduction, this is somewhat similar to proving that for hypercubes, Aldous's conjecture holds `asymptotically'
in the limit that the side-length, $n$, approaches $\infty$.

\section{The HJKN Method}

In this section we will review Handjani and Jungreis's key theorem from their paper \cite{HandjaniJungreis},
and we will state a simple corollary of their methodology.
We call this the HJKN method.

\begin{definition}
Suppose that for each $k=2,\dots,N$, we have a rate function, $q_k : X_{k,2} \to [0,\infty)$.
We say that $q_2,\dots,q_N$ is increasing if $q_{k+1}(\{i,j\}) \geq q_k(\{i,j\})$ for every
$1\leq i<j\leq k<N$.
\end{definition}

\begin{theorem}[Handjani and Jungreis, 1995]
\label{thm:HJ}
Suppose that $q_k : X_{k,2} \to [0,\infty)$ is a rate function for each $k=2,\dots,N$ and that $q_2,\dots,q_N$ is increasing.
If, also,
$$
\grw_2(q_2) \geq \grw_3(q_3) \geq \dots \geq \grw_N(q_N)\, ,
$$
then $q_2,\dots,q_N$ all satisfy the Aldous condition.
\end{theorem}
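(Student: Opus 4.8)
\noindent The plan is to argue by induction on $N$. The base case $N=2$ is immediate, since $\OIP_2(q_2)$ and $\ORW_2(q_2)$ are the same $2\times 2$ matrix, so $\gip_2(q_2)=\grw_2(q_2)$. For the inductive step I would first note that the truncation $q_2,\dots,q_{N-1}$ is again increasing with non-increasing random-walk gaps, so by the inductive hypothesis $q_2,\dots,q_{N-1}$ already satisfy Aldous's condition; only $q_N$ remains. Since $\gip_N(q_N)\le\grw_N(q_N)$ holds automatically, the goal is the reverse inequality, and one may assume $\grw_N(q_N)>0$ (otherwise both gaps vanish and there is nothing to prove).

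The key device is to condition on the position of one fixed particle, say particle $N$, which decomposes $\ell^2(S_N)=\bigoplus_{v\in X_N}\Hil_v$ with $\Hil_v=\{f:f(\pi)=0\text{ unless }\pi_N=v\}$; each $\Hil_v$ is canonically a copy of $\ell^2(S_{N-1})$ and $\mathbf 1_{\Hil_v}=1_{\phi_{N,N}^{-1}\{v\}}$. Setting $\mathcal C=\operatorname{Im}(T_{N,N})=\operatorname{span}\{\mathbf 1_{\Hil_v}:v\in X_N\}$ and $W=\mathcal C^{\perp}$, I would use $(T_{N,N})^{*}T_{N,N}=(N-1)!\,I_{X_N}$ together with the intertwining $\OIP_N(q_N)\,T_{N,N}=T_{N,N}\,\ORW_N(q_N)$ to establish, for every $f=g+T_{N,N}h$ with $g\in W$, the orthogonal identity
$$
\bigl\langle f,-\OIP_N(q_N)f\bigr\rangle=\bigl\langle g,-\OIP_N(q_N)g\bigr\rangle+(N-1)!\,\bigl\langle h,-\ORW_N(q_N)h\bigr\rangle ,
$$
and to note that $-\OIP_N(q_N)$ preserves $\mathcal C$, hence $W$, and that its restriction to $\mathcal C$ is similar to $-\ORW_N(q_N)$. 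Since $g\perp 1$ automatically (because $1=T_{N,N}1\in\mathcal C$) and $f\perp 1\iff h\perp 1$, the splitting $\ell^2(S_N)\cap 1^{\perp}=(\mathcal C\cap 1^{\perp})\oplus W$ is $\OIP_N(q_N)$-invariant, and this reduces the whole problem to one inequality: $\lambda_W\ge\grw_N(q_N)$, where $\lambda_W$ is the smallest eigenvalue of $-\OIP_N(q_N)$ on $W$.

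On $W=\bigoplus_v(\Hil_v\ominus\C\,\mathbf 1_{\Hil_v})$ the fibre structure splits $-\OIP_N(q_N)=\mathcal D+\mathcal L$: for $g=\sum_v g_v$ one has
$$
\bigl\langle g,-\OIP_N(q_N)g\bigr\rangle=\sum_{v\in X_N}\bigl\langle g_v,-\OIP_{N-1}(q_N^{(\widehat v)})g_v\bigr\rangle+\sum_{e=\{a,b\}}q_N(e)\,\bigl\|g_b-V_N\bigl((a,b)\bigr)g_a\bigr\|^2 ,
$$
where $q_N^{(\widehat v)}$ is $q_N$ with vertex $v$ deleted and the remaining vertices reindexed. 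The second sum, $\langle g,\mathcal L g\rangle\ge 0$, is the generator of a random walk on the $N$ fibres ``twisted'' by the unitaries $V_N((a,b))$, and it equals $\bigl(\sum_w q_N(\{v,w\})\bigr)\|g_v\|^2$ when $g$ lives on a single fibre $\Hil_v$. Because each $g_v\perp\mathbf 1_{\Hil_v}$, the diagonal part obeys $\langle g,\mathcal D g\rangle\ge\sum_v\gip_{N-1}(q_N^{(\widehat v)})\|g_v\|^2$. For the fibre $v=N$ this already suffices: $q_N^{(\widehat N)}=q_N|_{X_{N-1,2}}\ge q_{N-1}$ by the increasing hypothesis, so monotonicity of the interchange gap in the rates (as $-\widehat{\Delta}_{N,(i,j)}\ge 0$), the inductive hypothesis, and $\grw_{N-1}(q_{N-1})\ge\grw_N(q_N)$ give
$$
\gip_{N-1}\bigl(q_N^{(\widehat N)}\bigr)\ \ge\ \gip_{N-1}(q_{N-1})\ =\ \grw_{N-1}(q_{N-1})\ \ge\ \grw_N(q_N).
$$

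The hard part, and what I expect is the decisive idea of Handjani and Jungreis, is the remaining fibres $v\ne N$. There $q_N^{(\widehat v)}$ is entirely uncontrolled, and its graph may even be disconnected (take $q_N$ supported on a path and $v$ an interior vertex), so that $\gip_{N-1}(q_N^{(\widehat v)})$ can be $0$ and merely discarding $\mathcal L$ is fatal. The resolution must keep $\mathcal L$: it restores the missing coercivity through its ``degree'' term $\bigl(\sum_w q_N(\{v,w\})\bigr)\|g_v\|^2$ when $g$ concentrates on a bad fibre, and, for $g$ spread across fibres, through a flow/comparison estimate that transports the deficit of a bad fibre along paths in the (connected) graph of $q_N$ toward the good fibre $v=N$ --- calibrated so that $\mathcal D+\mathcal L$ on $W$ is still bounded below by \emph{exactly} $\grw_N(q_N)$, losing no constant. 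Carrying this out, presumably by re-applying the same conditioning recursively or by choosing explicit test flows, is the technical core. Once $\lambda_W\ge\grw_N(q_N)$ is secured, the displayed identity gives $\langle f,-\OIP_N(q_N)f\rangle\ge\grw_N(q_N)\bigl(\|g\|^2+(N-1)!\,\|h\|^2\bigr)=\grw_N(q_N)\|f\|^2$ for all $f\perp 1$, which closes the induction.
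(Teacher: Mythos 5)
Note first that the paper itself does not prove Theorem~\ref{thm:HJ}: it is quoted from Handjani and Jungreis with a pointer to their Theorem~1, so your attempt can only be compared with the known argument. Measured that way, your proposal contains a genuine gap, and you name it yourself: the decisive bound $\lambda_W\ge\grw_N(q_N)$ on the complement $W=\operatorname{Im}(T_{N,N})^{\perp}$ is never established. The parts you do carry out (the base case, the invariance of $\mathcal C=\operatorname{Im}(T_{N,N})$ and of $W$, the reduction to $\lambda_W$, and the chain of inequalities for the single fibre $v=N$) are correct, but the ``flow/comparison estimate'' you invoke for the fibres $v\ne N$ is pure speculation, and it is not at all clear it can be calibrated to lose no constant. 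The root of the trouble is your choice of conditioning: you condition on the \emph{position of particle $N$}, so the fibre over a vertex $v$ carries the interchange process with the vertex-deleted rates $q_N^{(\widehat v)}$, which for $v\ne N$ are not comparable to $q_{N-1}$ and whose graph may be disconnected --- exactly the obstruction you run into.

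The Handjani--Jungreis induction conditions instead on the \emph{identity of the particle occupying the distinguished vertex $N$} (the last vertex added). Then every fibre carries the same restricted rates $q_N|_{X_{N-1,2}}\ge q_{N-1}$, so Lemma~\ref{lem:positive}, the inductive hypothesis, and the non-increasing gap assumption give the uniform fibre bound $\gip_{N-1}(q_N|_{X_{N-1,2}})\ge\grw_N(q_N)$, and the edge terms at vertex $N$ are simply discarded as positive semi-definite. The price is that the span of these fibre indicators is \emph{not} invariant under $\OIP_N(q_N)$; this is repaired by using the larger invariant ``single-particle'' subspace $\mathcal M=\operatorname{span}\{1_{\{\pi\,:\,\pi_c=v\}}\,:\,c,v\in X_N\}$, on which the generator acts, card by card, as $\ORW_N(q_N)$. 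Taking an eigenfunction $f\perp 1$ realizing $\gip_N(q_N)$ (and assuming $\grw_N(q_N)>0$, as you may): if its projection onto $\mathcal M$ is nonzero, the eigenvalue is a nonzero random-walk eigenvalue, hence $\ge\grw_N(q_N)$; if $f\perp\mathcal M$, then in particular $\langle f,1_{\{\pi\,:\,\pi_c=N\}}\rangle=0$ for every $c$, so each fibre component has mean zero and the fibre-wise estimate applies, giving $\langle f,-\OIP_N(q_N)f\rangle\ge\grw_N(q_N)\|f\|^2$. With that replacement your outline closes; as written, the core step is missing, so the proposal is not a proof.
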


\begin{remark}
Handjani and Jungreis did not state their theorem this way: they stated something slightly less general.
They restricted attention to the case $q_{k+1}(\{i,j\}) = q_k(\{i,j\})$ for every $1\leq i<j\leq k<N$.
But their proof works in this more general setting with no changes.
\end{remark}

We will not prove Handjani and Jungreis's theorem, here, since it can be found in \cite{HandjaniJungreis}.
It is their main result: Theorem 1.
The reader is urged to consult their paper, which we find to be highly readable,
and which contains other interesting results, as well.

As we mentioned in the introduction, essentially the same argument was re-discovered, although in the context of quantum
spin systems, by Koma and Nachtergaele in \cite{KomaNachtergaele}.
It is for this reason that we call the method the HJKN method, for Handjani, Jungreis, Koma and Nachtergaele.

The main goal for the rest of this section will be to prove the following simple corollary of Handjani and Jungreis's theorem.

\begin{corollary}
\label{cor:mHJKN}
Suppose that  $q_k : X_{k,2} \to [0,\infty)$ is a rate function for each $k=2,\dots,N$ and that $q_1,\dots,q_N$ is increasing.
If
$$
\grw_N(q_N)\, =\, \min_{2\leq k\leq N} \grw_k(q_k)\, ,
$$
then $q_N$ satisfies Aldous's condition and 
$$
\min_{2\leq k\leq N} \gip_k(q_k)\, =\, \gip_N(q_N)\, =\, \grw_N(q_N)\, .
$$
\end{corollary}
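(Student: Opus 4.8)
\noindent\textbf{Proof plan for Corollary \ref{cor:mHJKN}.}
The plan is to re-enter the inductive argument that underlies Theorem \ref{thm:HJ}, rather than to invoke that theorem as a black box. A black-box argument seems awkward because any attempt to repair the monotonicity $\grw_2(\cdot)\geq\cdots\geq\grw_N(\cdot)$ by scaling or truncating the given rate functions simultaneously destroys the ``increasing'' property, and conversely. What the Handjani--Jungreis induction really produces, separated from the hypotheses of Theorem \ref{thm:HJ}, is a single ``remove-a-vertex'' estimate: for every $k\geq 3$ and every rate function $q:X_{k,2}\to[0,\infty)$,
\begin{equation}
\label{eq:rmvertex}
\gip_k(q)\,\geq\,\min\bigl(\grw_k(q),\ \gip_{k-1}(q|_{X_{k-1,2}})\bigr)\, ,
\end{equation}
where $q|_{X_{k-1,2}}$ is the restriction to pairs drawn from $\{1,\dots,k-1\}$ (i.e.\ the rate function obtained by deleting vertex $k$), together with the base identity $\gip_2(q)=\grw_2(q)$ (the Remark following Aldous's conjecture). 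I would establish \eqref{eq:rmvertex} by reproducing the relevant step of \cite{HandjaniJungreis}: using the intertwiner $T_{k,i}$ to decompose $\ell^2(S_k)$, one writes an eigenfunction of $\OIP_k(q)$ for the largest nonzero eigenvalue as a sum of a part carried by the random-walk dynamics and a part that lives on the $(k-1)$-particle subsystem with generator $\OIP_{k-1}(q|_{X_{k-1,2}})$, and bounds the Rayleigh quotient of the two pieces by $\grw_k(q)$ and by $\gip_{k-1}(q|_{X_{k-1,2}})$ respectively. The only soft fact needed is immediate from \eqref{eq:gapRW}--\eqref{eq:gapIP} and from the negative semidefiniteness of the $\Delta_{k,\pi}$ and $\widehat{\Delta}_{k,\pi}$: for rate functions on the same $X_{k,2}$, both $q\mapsto\grw_k(q)$ and $q\mapsto\gip_k(q)$ are nondecreasing for the pointwise order on $q$.

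Granting \eqref{eq:rmvertex}, the remainder is routine. For any increasing sequence $q_2,\dots,q_m$, iterate \eqref{eq:rmvertex} from level $m$ down to level $2$, using $\gip_2=\grw_2$ at the bottom, to obtain
\begin{equation}
\label{eq:iter}
\gip_m(q_m)\,\geq\,\min_{2\leq k\leq m}\grw_k\bigl(q_m|_{X_{k,2}}\bigr)\, .
\end{equation}
The ``increasing'' hypothesis gives $q_k\leq q_m|_{X_{k,2}}$ for every $k\leq m$, so by the monotonicity recorded above $\grw_k(q_m|_{X_{k,2}})\geq\grw_k(q_k)$, whence
\begin{equation}
\label{eq:keyb}
\gip_m(q_m)\,\geq\,\min_{2\leq k\leq m}\grw_k(q_k)\, .
\end{equation}
(One may assume every $q_k$ has a connected random-walk graph, since otherwise $\min_k\grw_k(q_k)=0$ and both conclusions of the corollary are trivial; in the remaining case every term in \eqref{eq:iter} is strictly positive, so the iteration is not vacuous.)

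Finally, apply \eqref{eq:keyb}. With $m=N$, the hypothesis $\grw_N(q_N)=\min_{2\leq k\leq N}\grw_k(q_k)$ gives $\gip_N(q_N)\geq\grw_N(q_N)$, and since $\gip_N(q_N)\leq\grw_N(q_N)$ always, $q_N$ satisfies Aldous's condition. For the remaining equality, apply \eqref{eq:keyb} with $m=k$ for each $k\in\{2,\dots,N\}$ (the truncation $q_2,\dots,q_k$ is again increasing): $\gip_k(q_k)\geq\min_{2\leq j\leq k}\grw_j(q_j)\geq\min_{2\leq j\leq N}\grw_j(q_j)=\grw_N(q_N)=\gip_N(q_N)$. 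Since $\gip_N(q_N)$ is itself one of the numbers $\gip_k(q_k)$, this yields $\min_{2\leq k\leq N}\gip_k(q_k)=\gip_N(q_N)=\grw_N(q_N)$.

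The one genuinely non-trivial ingredient is \eqref{eq:rmvertex}, which is exactly the inductive step inside the proof of Theorem \ref{thm:HJ}. The delicate points in isolating it are the precise identification of the $(k-1)$-particle subsystem that appears (it is governed by $\OIP_{k-1}(q|_{X_{k-1,2}})$, but justifying this requires care with the orthogonal decomposition of $\ell^2(S_k)$ relative to the maps $T_{k,i}$) and the bookkeeping about deleting a vertex that may a priori disconnect the graph (which is harmless, since then $\grw_{k-1}$ and $\gip_{k-1}$ of the restriction both vanish and \eqref{eq:rmvertex} holds vacuously); for this step I would follow \cite{HandjaniJungreis} closely, while everything after \eqref{eq:rmvertex} is the soft argument above.
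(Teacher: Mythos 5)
Your proposal is correct, but it takes a genuinely different route from the paper, and your opening premise about the black-box route is actually refuted by the paper's own proof. The paper does use Theorem \ref{thm:HJ} as a black box: for each $k<N$ it scales only the rates on pairs containing the top index $k$ by a factor $t\in[0,1]$, observes that $\grw_k(\tilde q_{k,t})$ is continuous and (by Lemma \ref{lem:positive}) non-decreasing in $t$, vanishes at $t=0$ and is $\geq\grw_N(q_N)$ at $t=1$, and so chooses $t_k$ with $\grw_k(\tilde q_{k,t_k})=\grw_N(q_N)$; since only the edges at the newest vertex were lowered, and those are dominated by the untouched rates at level $k+1$, the modified family is still increasing, Theorem \ref{thm:HJ} applies with all RW gaps equal, and Lemma \ref{lem:positive} transfers the conclusion back to the original $q_k$. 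Your alternative---isolating the remove-a-vertex inequality $\gip_k(q)\geq\min\bigl(\grw_k(q),\,\gip_{k-1}(q|_{X_{k-1,2}})\bigr)$, iterating it against the restrictions $q_m|_{X_{k,2}}$, and then using monotonicity of the gaps in the rates---is valid, and it even yields the cleaner unconditional bound $\gip_N(q_N)\geq\min_{2\leq k\leq N}\grw_k(q_k)$, from which both the corollary and Theorem \ref{thm:HJ} follow at once; the cost is that your key inequality is stated neither in this paper nor in the statement of \cite{HandjaniJungreis}, only inside its proof, so you must actually reprove it, and your sketch of that step should be tightened. One does not split a single eigenfunction: rather, $W=\sum_{i}T_{k,i}\bigl(\ell^2(X_k)\bigr)$ is an invariant subspace of $\OIP_k(q)$ by the intertwining relation, so any mean-zero $F$ decomposes orthogonally as $F_W+F_{W^\perp}$ with no cross terms in the Dirichlet form (and $F_W\perp 1$ since $1\in W$); on $W$ the spectrum of $-\OIP_k(q)$ is contained in that of $-\ORW_k(q)$, giving the bound $\grw_k(q)$, while for $F\in W^\perp$ one decomposes $S_k$ according to which particle occupies vertex $k$, drops the nonnegative edge terms incident to vertex $k$, and uses that $F\perp\operatorname{Im}(T_{k,a})$ forces $F$ to have mean zero on each block, so each block Rayleigh quotient is at least $\gip_{k-1}(q|_{X_{k-1,2}})$ (the disconnected case being trivial, as you note). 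With that lemma established, the remainder of your argument---the iteration, the use of the increasing hypothesis via $q_k\leq q_m|_{X_{k,2}}$, and the two applications at $m=N$ and $m=k$---is complete and correct.
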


The key to proving the corollary will be to find rate functions $\tilde{q}_k : X_{k,2} \to [0,\infty)$ for $k=2,\dots,N$, such that:
first,
$\tilde{q}_2,\dots,\tilde{q}_N$ is increasing;
second,
$\grw_2(\tilde{q}_2) \geq \dots \geq \grw_N(\tilde{q}_N)$;
third,
$\tilde{q}_k(\{i,j\}) \leq q_k(\{i,j\})$, for all $1\leq i<j\leq k<N$; and,
finally,
$\tilde{q}_N(\{i,j\}) = q_N(\{i,j\})$, for all $1\leq i<j\leq N$.
We will need the following elementary lemma.

\begin{lemma}
\label{lem:positive}
For any $k\geq 2$, suppose that $q$ and $\tilde{q}$ are rate functions from $X_{k,2}$ to $[0,\infty)$ such that 
$\tilde{q} \leq q$, pointwise.
Then $\grw_k(q) \geq \grw_k(\tilde{q})$ and $\gip_k(q) \geq \gip_k(\tilde{q})$.
\end{lemma}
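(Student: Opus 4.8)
The plan is to reduce both inequalities to a single monotonicity observation about the quadratic forms, which then plugs directly into the variational characterizations \eqref{eq:gapRW} and \eqref{eq:gapIP}. First I would write
$$
\ORW_k(q) - \ORW_k(\tilde q)\, =\, \sum\nolimits_{\{i,j\}\in X_{k,2}} \big(q(\{i,j\})-\tilde q(\{i,j\})\big)\, \Delta_{k,(i,j)}\, ,
$$
and similarly for $\OIP_k(q)-\OIP_k(\tilde q)$ with $\widehat\Delta_{k,(i,j)}$ in place of $\Delta_{k,(i,j)}$. Since $\tilde q\leq q$ pointwise, every coefficient $q(\{i,j\})-\tilde q(\{i,j\})$ is nonnegative, and we already observed that each $\Delta_{k,(i,j)}$ and each $\widehat\Delta_{k,(i,j)}$ is negative semi-definite (they were written as $-\tfrac12 A^*A$). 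A nonnegative linear combination of negative semi-definite operators is negative semi-definite, so $\ORW_k(q)-\ORW_k(\tilde q)\leq 0$ and $\OIP_k(q)-\OIP_k(\tilde q)\leq 0$ in the Loewner order; equivalently,
$$
\langle f,-\ORW_k(q) f\rangle_{\ell^2(X_k)}\, \geq\, \langle f,-\ORW_k(\tilde q) f\rangle_{\ell^2(X_k)}\quad\text{for all }f\in\ell^2(X_k),
$$
and the analogous pointwise-in-$f$ inequality for the IP generators on $\ell^2(S_k)$.

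Next I would invoke the variational formulas: $\grw_k(q)=\min\{\langle f,-\ORW_k(q)f\rangle : \|f\|=1,\ \langle f,1\rangle=0\}$, and the same with $\tilde q$. Both minima are taken over exactly the same constraint set (the constraints do not depend on the rate function), and on that set the objective for $q$ dominates the objective for $\tilde q$ pointwise. Hence the minimum for $q$ is at least the minimum for $\tilde q$, giving $\grw_k(q)\geq\grw_k(\tilde q)$. Repeating verbatim with $\ell^2(S_k)$, the constraint $\langle f,1\rangle_{\ell^2(S_k)}=0$, and \eqref{eq:gapIP} yields $\gip_k(q)\geq\gip_k(\tilde q)$.

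There is essentially no obstacle here; the only point requiring a line of care is the passage from "nonnegative combination of negative semi-definite operators is negative semi-definite" to the comparison of the two constrained minima, which is immediate because the feasible sets coincide and only the integrand changes monotonically. I would phrase the whole argument in terms of quadratic forms to avoid any subtlety about self-adjointness (all operators involved are manifestly self-adjoint, being real symmetric combinations of the $\Delta$'s). This is why the lemma is labeled elementary, and it is exactly the tool needed to compare the auxiliary rate functions $\tilde q_k$ with the given $q_k$ in the proof of Corollary \ref{cor:mHJKN}.
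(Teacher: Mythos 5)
Your proposal is correct and follows essentially the same route as the paper: write the difference of generators as a nonnegative combination of the negative semi-definite $\Delta_{k,(i,j)}$ (resp.\ $\widehat\Delta_{k,(i,j)}$), deduce the pointwise ordering of the quadratic forms, and compare the two constrained minima in \eqref{eq:gapRW} and \eqref{eq:gapIP}. The only difference is that you spell out explicitly that the feasible sets coincide, a step the paper leaves implicit.
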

\begin{proofof}{\bf Proof of Lemma \ref{lem:positive}:}
Since $q(\{i,j\}) \geq \tilde{q}(\{i,j\})$ for all $1\leq i<j\leq k$, and since each $\Delta_{k,(i,j)}$ and $\widehat{\Delta}_{k,(i,j)}$ are negative
semi-definite, we see that
$$
\ORW_k(q) - \ORW_k(\tilde{q})\, =\, \sum\nolimits_{\{i,j\} \in X_{k,2}} [q(\{i,j\}) - \tilde{q}(\{i,j\})] \Delta_{k,(i,j)}
$$
and
$$
\OIP_k(q) - \OIP_k(\tilde{q})\, =\, \sum\nolimits_{\{i,j\} \in X_{k,2}} [q(\{i,j\}) - \tilde{q}(\{i,j\})] \widehat{\Delta}_{k,(i,j)}
$$
are both negative semi-definite.
In particular, this means that 
$$
\langle f,-\ORW_k(q) f\rangle_{\ell^2(X_k)}\, \geq\, \langle f,-\ORW_k(\tilde{q}) f\rangle_{\ell^2(X_k)}\, ,
$$
for all $f \in \ell^2(X_k)$, and
$$
\langle f,-\OIP_k(q) f\rangle_{\ell^2(X_k)}\, \geq\, \langle f,-\OIP_k(\tilde{q}) f\rangle_{\ell^2(S_k)}\, ,
$$
for all $f \in \ell^2(S_k)$.
Combined with the gap definitions in (\ref{eq:gapRW}) and (\ref{eq:gapIP}), this proves the lemma.
\end{proofof}

\begin{proofof}{\bf Proof of Corollary \ref{cor:mHJKN}:}
For each $k=2,\dots,N-1$, and all $t \in [0,1]$, let $\tilde{q}_{k,t}$ be the rate function on $X_{k,2}$ such that
$$
\tilde{q}_{k,t}(\{i,j\})\, =\, \begin{cases} t q_k(\{i,j\}) & \text{ if $k \in \{i,j\}$,}\\ q_k(\{i,j\}) & \text{ otherwise.} \end{cases}
$$
Since eigenvalues of matrices are continuous functions of the matrix entries, we see that $\grw_k(\tilde{q}_{k,t})$
is a continuous function of $t$.
Also, by Lemma \ref{lem:positive}, we see that $\grw_k(\tilde{q}_{k,t})$ is non-decreasing in $t$.
Note that $\grw_k(\tilde{q}_{k,0})=0$ because we can find an eigenvector with $0$-eigenvalue by taking
$$
f(i)\, =\, \begin{cases} \sqrt{(k-1)/k} & \text{ if $i = k$,} \\ -1/\sqrt{k(k-1)} & \text{ if $i=1,\dots,k-1$.} \end{cases}
$$
Referring to the gap definition of (\ref{eq:gapRW}),
it is trivial to check that $\langle f,1 \rangle_{\ell^2(X_k)}=0$.
But also, for $t=0$, we see that $\tilde{q}_{k,t}(\{i,k\})=0$ for all $1\leq i\leq k-1$.
So this function really does have eigenvalue equal to zero.
In other words, since the vertex $k$ is disconnected from $\{1,\dots,k-1\}$,
there are two stationary measures: the uniform measure on $\{1,\dots,k-1\}$ and the point-mass on $k$.

On the other hand, we know that, for $t=1$, we obtain $\grw_k(\tilde{q}_{k,t}) = \grw_k(\tilde{q}_{k,1}) \geq \grw_N(q_{N})$.
Since the function $t \mapsto \grw_k(\tilde{q}_{k,t})$ is continuous and non-decreasing with $t$, 
there is at least one $t_k$ such that $0\leq t_k\leq 1$ and $\grw_k(\tilde{q}_{k,t_k})=\grw_N(q_N)$.
For $2\leq k\leq N-1$, we let $\tilde{q}_k = \tilde{q}_{k,t_k}$ for this $k$.
We let $\tilde{q}_N = q_N$.
Note that, for $1\leq i,j\leq k-1$ we have $\tilde{q}_k(\{i,j\}) = q_k(\{i,j\})$, while for all $\{i,j\} \in X_{k,2}$, we have $\tilde{q}_k(\{i,j\}) \leq q_k(\{i,j\})$.
Transferring this property to $k+1$, assuming $2\leq k\leq N-1$, we see that, for $1\leq i<j\leq k$, we have
$$
\tilde{q}_{k+1}(\{i,j\})\, =\, q_{k+1}(\{i,j\})\, \geq\, q_k(\{i,j\})\, \geq\, \tilde{q}_k(\{i,j\})\, ,
$$
because we assumed $q_2,\dots,q_N$ was an increasing sequence.
Therefore, we conclude that $\tilde{q}_2,\dots,\tilde{q}_N$ is also an increasing sequence.
But $\grw_2(\tilde{q}_2)=\dots=\grw_N(\tilde{q}_N)$.
So, by Theorem \ref{thm:HJ}, this implies that $\tilde{q}_k$ satisfies Aldous's condition for each $k=2,\dots,N$.
In particular, $q_N=\tilde{q}_N$ satisfies Aldous's condition.

Also note that, by Lemma \ref{lem:positive}, we have $\gip_k(\tilde{q}_k) \leq \gip_k(q_k)$ because $\tilde{q}_k \leq q_k$, pointwise.
So
$$
\gip_k(q_k)\, \geq\, \gip_k(\tilde{q}_k)\, =\, \grw_k(\tilde{q}_k)\, =\, \grw_N(q_N)\, =\, \gip_N(q_N)\, .
$$
\end{proofof}

\begin{definition}
Given two real sequences, $(x_N)_{N=2}^{\infty}$ and $(y_N)_{N=2}^{\infty}$, such that $y_N > 0$ for all $N$,
we say that ``$x_N \sim y_N$, as $N \to \infty$,'' if 
$$
\lim_{N \to \infty} x_N/y_N\, =\, 1\, .
$$
\end{definition}

The following elementary fact is a corollary of the corollary.

\begin{proposition}
\label{prop:power}
Suppose that for each $k\geq 2$, there is a rate function $q_k : X_{k,2} \to [0,\infty)$ such that $q_2,q_3,\dots$ is increasing
and $\grw_2(q_2) \geq \grw_3(q_3) \geq \dots \geq \grw_k(q_k)  >0$ for each $k$. If there is a constant $C<\infty$ and an exponent $p>0$ such that 
$$
\grw_N(q_N)\, \sim\, C N^{-p}\, ,
$$ 
then also $\gip_N(q_N) \sim C N^{-p}$.
\end{proposition}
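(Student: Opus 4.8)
The plan is to feed the power-law asymptotics into Corollary~\ref{cor:mHJKN}. As the proposition is stated, the monotonicity $\grw_2(q_2) \ge \grw_3(q_3) \ge \cdots$ forces $\grw_N(q_N) = \min_{2 \le k \le N}\grw_k(q_k)$ for \emph{every} $N$, so Corollary~\ref{cor:mHJKN} applies at each $N$ and delivers $\gip_N(q_N) = \grw_N(q_N)$ outright; the conclusion $\gip_N(q_N) \sim C N^{-p}$ is then nothing more than substitution. I will, however, organize the argument so that it survives the more robust hypothesis in which only $\grw_k(q_k) > 0$ for all $k$ and $\grw_N(q_N) \sim C N^{-p}$ are assumed --- the form one actually meets in the hypercube application, where the sequence of gaps need not be monotone. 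Call $N$ a \emph{running minimum} if $\grw_N(q_N) = \min_{2 \le k \le N}\grw_k(q_k)$. Because $\grw_N(q_N) \to 0$ while every $\grw_k(q_k)$ is strictly positive, there are infinitely many running minima, and the largest running minimum not exceeding $N$ tends to $\infty$ with $N$.

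For each $N$, let $M(N)$ be the least running minimum that is $\ge N$. Applying Corollary~\ref{cor:mHJKN} at $M(N)$ --- whose defining property is precisely the minimality hypothesis of that corollary --- gives $\min_{2 \le k \le M(N)}\gip_k(q_k) = \gip_{M(N)}(q_{M(N)}) = \grw_{M(N)}(q_{M(N)})$. Since $N \le M(N)$, and since always $\gip_N(q_N) \le \grw_N(q_N)$, this squeezes
$$
\grw_{M(N)}(q_{M(N)})\ \le\ \gip_N(q_N)\ \le\ \grw_N(q_N)\, .
$$
It therefore remains only to show that $\grw_{M(N)}(q_{M(N)}) \sim C N^{-p}$ as well, for which it suffices to prove $M(N) \sim N$.

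The bound $M(N) \ge N$ is trivial. For the other direction, fix $\delta > 0$ and set $m = \lceil (1+\delta)N \rceil$. The value $\min_{2 \le k \le m}\grw_k(q_k)$ is attained at some running minimum $j \le m$, and it is at most $\grw_m(q_m)$, which is $\le (1+\epsilon)C m^{-p} \le (1+\epsilon)C((1+\delta)N)^{-p}$ for $N$ large. If $j$ were $\le N$, then this value would also be $\ge \min_{2 \le k \le N}\grw_k(q_k)$, and the latter, being attained at a running minimum that grows with $N$, is $\ge (1-\epsilon)C N^{-p}$ for $N$ large; but $(1-\epsilon)C N^{-p} > (1+\epsilon)C((1+\delta)N)^{-p}$ once $\epsilon$ is small relative to $\delta$, a contradiction. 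Hence for $N$ large there is a running minimum in $(N, m]$, so $M(N) \le (1+\delta)N + 1$. Letting $\delta \downarrow 0$ yields $M(N) \sim N$, whence $\grw_{M(N)}(q_{M(N)}) \sim C M(N)^{-p} \sim C N^{-p}$, and the displayed sandwich forces $\gip_N(q_N) \sim C N^{-p}$.

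The main obstacle is this last step, the estimate $M(N) \sim N$, which amounts to saying that the running minima of $k \mapsto \grw_k(q_k)$ are asymptotically dense on the multiplicative scale; it is the only place the precise power-law form of the hypothesis (rather than mere decay to $0$) enters. Under the monotonicity assumed in Proposition~\ref{prop:power} every index is a running minimum and $M(N) = N$, so in that generality the statement is an immediate reading of Corollary~\ref{cor:mHJKN}, and no work is required.
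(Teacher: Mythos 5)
Your proof is correct and follows essentially the same route as the paper's own argument, which likewise never uses the (unneeded) monotonicity of the gaps: both proofs work with running minima, apply Corollary~\ref{cor:mHJKN} at the nearest local minimum $M(N)\geq N$ to get the sandwich $\grw_{M(N)}(q_{M(N)})\leq \gip_N(q_N)\leq \grw_N(q_N)$, and use the two-sided power-law bounds (via a contradiction with the largest running minimum below a given index) to show $M(N)$ is multiplicatively close to $N$. The only difference is bookkeeping: the paper fixes $\epsilon$, proves $M(k)\leq \lceil\big((1+\epsilon)/(1-\epsilon)\big)^{1/p}k\rceil$, and lets $\epsilon\to 0$ at the end, whereas you prove $M(N)\sim N$ directly with an auxiliary $\delta$.
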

\begin{proof}
Choose $\epsilon \in (0,1)$.
Then there is some $N_0<\infty$ such that $(1-\epsilon) C N^{-p} \leq \grw_N(q_N) \leq (1+\epsilon) C N^{-p}$ for every $N\geq N_0$.
Note that $\min_{k\leq N_0} \grw_k(q_k)>0$ by assumption.
Since $N^{-p}$ converges to $0$, we can also find another $N_1$, such that $N_1 \geq N_0$ and such that 
$\grw_{N_1}(q_{N_1}) \leq \min_{k\leq N_0} \grw_k(q_k)$.
For each $N \geq N_1$, let us define 
$$
\grw_{\min}(N)\, :=\, \min_{k \in \{2,\dots,N\}} \grw_k(q_k)\, .
$$ 
Note that, since $\grw_{N_1}(q_{N_1}) \leq  \min_{k\leq N_0} \grw_k(q_k)$, we can actually take
$$
\grw_{\min}(N)\, :=\, \min_{k \in \{N_0,\dots,N\}} \grw_k(q_k)\, .
$$ 
Also, for each $N\geq N_1$, let us define
$$
K(N)\, =\, \max\{ k \in \{2,\dots,N\}\, :\, \grw_k(q_k) = \grw_{\min}(N)\}\, .
$$
Thus $K(N)$ is the largest $k\leq N$ such that $q_k$ gives a ``local min,'' and hence we are guaranteed that $q_k$ satisfies Aldous's condition
by Corollary \ref{cor:mHJKN}.
Note that for this local min 
$$
\grw_{K(N)}(q_{K(N)})\, =\, \grw_{\min}(N)\, \leq\, \grw_N(q_N)\, \leq\, (1+\epsilon) C N^{-p}\, .
$$
But also, $K(N) \geq N_0$ because we restricted the minimum to $k \in \{N_0,\dots,N\}$.
Therefore, 
$$
(1+\epsilon) C N^{-p}\, \geq\, \grw_{\min}(N)\, =\, \grw_{K(N)}(q_{K(N)})\, \geq\, (1-\epsilon) C \big(K(N)\big)^{-p}\, .
$$
This implies that $K(N) \geq \big((1-\epsilon)/(1+\epsilon)\big)^{1/p} N$.

Now, let us define $M(k) = \min\{M\, :\, M\geq N_1\, ,\ M\geq k\, \grw_{\min}(M) = \grw_M(q_M)\}$, for each $k\geq 2$.
This is nothing other than the smallest ``local min'' that occurs at or after $k$ (and greater than or equal to $N_1$ so that the asymptotic bounds apply).
Then, by Corollary \ref{cor:mHJKN}, $q_{M(k)}$ satisfies Aldous's condition, too, and
$$
\grw_{M(k)}(q_{M(k)})\, =\, \gip_{M(k)}(q_{M(k)})\, \leq\, \gip_k(q_k)\, \leq\, \grw_k(q_k)\, .
$$
But since $K(N) \geq \big((1-\epsilon)/(1+\epsilon)\big)^{1/p} N$, for $N\geq N_1$, we see that $M(k) \leq \lceil\big((1+\epsilon)/(1-\epsilon))^{1/p} k\rceil$,
for $k\geq N_1$.
Otherwise, we could take $N=\lceil\big((1+\epsilon)/(1-\epsilon))^{1/p} k\rceil$, and conclude that $K(N)$ is less than $M(k)$ and it is a ``local min'' no smaller than $k$
(contradicting  the definition of $M(k)$).
Therefore, we see that
$$
\gip_k(q_k)\, \geq\, \gip_{M(k)}(q_{M(k)})\, =\, \grw_{M(k)}(q_{M(k)})\, \geq\, C (1-\epsilon) [M(k)]^{-p}\, \succeq\, C\, \frac{(1-\epsilon)^2}{1+\epsilon}\, k^{-p}\, ,
$$
where $\succeq$ denotes asymptotically greater than: $a_k \succeq b_k$ if and only if $\liminf_{k \to \infty} a_k/b_k \geq 1$.
We already knew $\gip_k(q_k) \leq \grw _k(q_k) \leq (1+\epsilon) C k^{-p}$.
This shows that
$$
\liminf_{k \to \infty} \gip_k(q_k)/(C k^{-p})\, \geq\, \frac{(1-\epsilon)^2}{1+\epsilon}\quad \text{and}\quad
\limsup_{k \to \infty} \gip_k(q_k)/(C k^{-p})\, \leq\, 1+\epsilon\, .
$$
But $\epsilon \in (0,1)$ was arbitrary.
Taking $\epsilon \to 0^+$, this shows that 
$$
\lim_{k \to \infty} \gip_k(q_k)/(C k^{-p})\, =\, 1\, .
$$
\end{proof}


\section{A Discrete Trace Theorem}

We begin with an elementary lemma.

\begin{lemma}
\label{lem:trace1d}
For any $n\geq 1$, and for any $f \in \ell^2(\{1,\dots,n\})$, 
$$
|f(n+1)|^2\, \leq\, \frac{2}{n} \sum\nolimits_{k=1}^{n} |f(k)|^2 + 2n \sum\nolimits_{k=1}^{n} |f(k+1) - f(k)|^2\, .
$$
\end{lemma}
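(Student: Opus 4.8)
The plan is to mimic the one-dimensional continuum Trace theorem argument sketched in the introduction, replacing the derivative by the forward difference, the integral by a sum, and the multiplier $x$ by a discrete weight chosen so that the boundary contribution at the \emph{left} endpoint disappears. Concretely, I would set $g(k) := (k-1) f(k)$ for $k = 1,\dots,n+1$, so that $g(1) = 0$ and $g(n+1) = n\, f(n+1)$, and then write
$$
n\, f(n+1)\, =\, g(n+1) - g(1)\, =\, \sum_{k=1}^{n} \big(g(k+1) - g(k)\big)\, .
$$

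The next step is the elementary rearrangement
$$
g(k+1) - g(k)\, =\, k\, f(k+1) - (k-1) f(k)\, =\, k\,\big(f(k+1) - f(k)\big) + f(k)\, ,
$$
which splits the telescoping sum into a ``difference'' piece $\sum_{k=1}^{n} k\,(f(k+1)-f(k))$ and a ``bulk'' piece $\sum_{k=1}^{n} f(k)$. Applying the triangle inequality and then the Cauchy--Schwarz inequality to each piece separately gives
$$
n\,|f(n+1)|\, \leq\, \Big(\sum_{k=1}^{n} k^2\Big)^{1/2} \Big(\sum_{k=1}^{n} |f(k+1)-f(k)|^2\Big)^{1/2} + n^{1/2} \Big(\sum_{k=1}^{n} |f(k)|^2\Big)^{1/2}\, .
$$

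Finally I would square both sides, use $(a+b)^2 \leq 2a^2 + 2b^2$ to decouple the two contributions, divide by $n^2$, and invoke the closed form $\sum_{k=1}^{n} k^2 = \tfrac{n(n+1)(2n+1)}{6}$ together with the bound $\sum_{k=1}^{n} k^2 \leq n^3$ (valid for every $n\geq 1$, since it is equivalent to $(4n+1)(n-1)\geq 0$). This turns the coefficient $\tfrac{2}{n^2}\sum_{k=1}^{n} k^2$ into $2n$ and yields exactly
$$
|f(n+1)|^2\, \leq\, \frac{2}{n}\sum_{k=1}^{n} |f(k)|^2 + 2n \sum_{k=1}^{n} |f(k+1) - f(k)|^2\, .
$$

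There is no genuine obstacle here; the only point needing a moment's thought is the choice of the weight $(k-1)$ rather than $k$ or $(k+1)$. One wants the weight to vanish at the left end so that no $|f(1)|$ term survives in the estimate, while still producing the factor $n$ in front of $f(n+1)$; and one then needs the resulting weighted Cauchy--Schwarz constant $\sum_{k=1}^{n} k^2$ to be dominated by $n^3$ so that the constants come out as the clean $2/n$ and $2n$ stated in the lemma (and so that the constants $a_1 = b_1 = 2$ advertised in the introduction are recovered). With the weight $k$ instead, the endpoint term would not cancel and one would pick up an extra $|f(1)|^2$ contribution.
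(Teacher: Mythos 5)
Your proof is correct and is essentially the paper's own argument: your identity $n f(n+1) = \sum_{k=1}^{n} k\,(f(k+1)-f(k)) + \sum_{k=1}^{n} f(k)$ is exactly the paper's telescoping identity multiplied by $n$, followed by the same Cauchy--Schwarz and $(a+b)^2 \leq 2a^2+2b^2$ steps. The only cosmetic difference is that the paper bounds the weight $k/n \leq 1$ before applying Cauchy--Schwarz, whereas you keep the weight and bound $\sum_{k=1}^{n} k^2 \leq n^3$ afterwards; both yield the same constants.
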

\begin{proof}
By a telescoping sum, it is easy to see that
$$
f(n+1)\, =\, \frac{1}{n} \sum\nolimits_{k=1}^{n} f(k) + \sum\nolimits_{k=1}^{n} \frac{k}{n} [f(k+1) - f(k)]\, .
$$
Therefore,
$$
|f(n+1)|\, \leq\, \frac{1}{n} \sum\nolimits_{k=1}^{n} |f(k)| + \sum\nolimits_{k=1}^{n} |f(k+1)-f(k)|\, .
$$
Applying the Cauchy-Schwarz inequality to each of the two sums, separately,
$$
|f(n+1)|\, \leq\, n^{-1/2}\left(\sum\nolimits_{k=1}^{n} |f(k)|^2\right)^{1/2} + n^{1/2} \left(\sum\nolimits_{k=1}^{n} |f(k+1)-f(k)|^2\right)^{1/2}\, .
$$
Since $(a+b)^2 \leq 2 a^2 + 2 b^2$, this then gives the result.
\end{proof}

The discrete Trace theorem is just a generalization of this basic inequality to higher dimensions, and more general graphs.
Suppose that the dimension, $d \in 1,2,\dots$, has been chosen and fixed.
Note that, given a finite subset $V \subset \Z^d_n$, we have
$$
\langle f, -\ORW(V) f\rangle_{\ell^2(V)}\, =\, \sum_{\substack{\{x,y\} \subseteq V\\ \|x-y\|_1=1}} \langle f, -\Delta_{(x,y)} f\rangle_{\ell^2(V)}\,
=\, \sum_{\substack{\{x,y\} \subseteq V\\ \|x-y\|_1 = 1}} |f(x) - f(y)|^2\, .
$$
For each $n\geq 1$,
define a ``simplicial decomposition" of $R^d_{n+1}$, as follows. For $k \in \{1,\dots,d\}$, define
$$
S^d_{n,k}\, =\, \{(x_1,\dots,x_d) \in \Z^d\, :\, 1\leq x_1,\dots,x_{k-1}\leq n+1\, ,\ x_k=n+1\, ,\ 1\leq x_{k+1},\dots,x_d\leq n\}\, .
$$
In Figure \ref{fig:one}, this is shown for two hypercubes: the $d=2$ hypercube, or square; and the $d=3$ hypercube, or cube.
Then $R^d_{n+1}$ can be written as a disjoint union
$$
R^d_{n+1}\, =\, R^d_n \cup \bigcup\nolimits_{k=1}^{d} S^d_{n,k}\, .
$$
Given $k \in \{1,\dots,d\}$ and $x \in S^d_{n,k}$,
define
$$
K^d_{n,k}(x)\, =\, \{(x_1,\dots,x_{k-1},j,x_{k+1},\dots,x_d)\, :\, 1\leq j\leq n+1\}\, .
$$
We make the following definition.

\begin{figure}
\centering
\begin{tikzpicture}[xscale=0.5,yscale=0.5]
\fill[gray!40!white] (4,0) rectangle (5,4);
\fill[gray!60!black] (0,4) rectangle (5,5);
\foreach \x in {0,1,...,5}
  \draw (\x,0) -- (\x,5); 
\foreach \y in {0,1,...,5}
  \draw (0,\y) -- (5,\y); 
\draw[very thick] (0,0) rectangle (4,4);
\draw[very thick] (0,4) rectangle (5,5);
\draw[very thick] (4,0) rectangle (5,4);
\draw (2.5,5) node[above] {$S^2_{4,2}$};
\draw (5,2) node[right] {$S^2_{4,1}$};
\draw(2,0) node[below] {$R^2_4$};
\end{tikzpicture}
\hspace{30pt}
\begin{tikzpicture}[xscale=0.4,yscale=0.4]
\draw[->,very thick] (1,1,1) -- (8,1,1) node[right] {\large $x_1$};
\draw[->,very thick] (1,1,1) -- (1,8,1) node[above] {\large $x_2$};
\draw[->,very thick] (1,1,1) -- (1,1,8.5) node[below left] {\large $x_3$};
\fill[gray!40!white] (6,1,1) -- (6,5,1) -- (6,5,5) -- (6,1,5) -- (6,1,1);
\fill[gray!90!black] (6,5,1) -- (6,6,1) -- (6,6,5) -- (6,5,5) -- (6,5,1);
\fill[gray!90!black] (1,6,1) -- (1,6,5) -- (6,6,5) -- (6,6,1) -- (1,6,1);
\fill[black!80!white] (1,1,6) rectangle (6,6,6);
\fill[black!80!white] (6,1,5) -- (6,6,5) -- (6,6,6) -- (6,1,6) -- (6,1,5);
\fill[black!80!white] (1,6,5) -- (6,6,5) -- (6,6,6) -- (1,6,6) -- (1,6,5);
\foreach \x in {1,...,5},
  \draw[thick] (\x,1,6) -- (\x,6,6);
\foreach \y in {1,...,5},
  \draw[thick] (1,\y,6) -- (6,\y,6);
\foreach \x in {1,...,6},
  \draw[thick] (\x,6,1) -- (\x,6,6);
\foreach \z in {1,...,6},
  \draw[thick] (1,6,\z) -- (6,6,\z);
\foreach \y in {1,...,5},
  \draw[thick] (6,\y,1) -- (6,\y,6);
\foreach \z in {1,...,6},
  \draw[thick] (6,1,\z) -- (6,6,\z);
\draw (6,3,1) node[right] {$S^3_{4,1}$};
\draw (3.5,6,1) node[above] {$S^3_{4,2}$};
\draw (3.5,1,6) node[below] {$S^3_{4,3}$};
\end{tikzpicture}
\caption{Two ``simplicial decompositions'': for $R^2_5$ and $R^3_5$ ($R^3_4$ is hidden).}
\label{fig:one}
\end{figure}
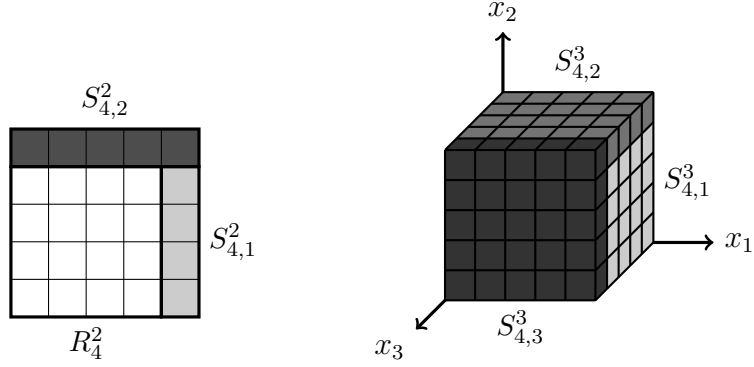

\begin{definition}
We say that $V \subseteq R^d_{n+1}$ is $R^d_n$-traceable if $K^d_{n,k}(x) \subseteq V$ for every $k \in \{1,\dots,d\}$ and $x \in V\cap S^d_{n,k}$.
\end{definition}

\begin{theorem}
\label{thm:trace}
Suppose that $V \subseteq R^d_{n+1}$ is $R^d_n$-traceable.
Then
\begin{equation}
\label{ineq:traced}
\sum_{x \in V \setminus R^d_n} |f(x)|^2\, \leq\, \frac{2d}{n} \|f\|^2_{\ell^2(V)} + 2n\langle f, -\ORW(V) f\rangle_{\ell^2(V)}\, .
\end{equation}
\end{theorem}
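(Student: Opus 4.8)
The plan is to reduce everything to the one–dimensional estimate of Lemma \ref{lem:trace1d} by slicing $V$ along coordinate lines. First I would use the disjoint decomposition $V\setminus R^d_n = \bigcup_{k=1}^d (V\cap S^d_{n,k})$, which follows from the simplicial decomposition $R^d_{n+1} = R^d_n \cup \bigcup_{k=1}^d S^d_{n,k}$ recorded above: a point $x\in R^d_{n+1}\setminus R^d_n$ lands in exactly one face, namely $S^d_{n,k}$ for $k$ the largest index with $x_k=n+1$. Hence it suffices to bound $\sum_{x\in V\cap S^d_{n,k}}|f(x)|^2$ for each fixed $k$ and then sum over $k$.

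Fix $k$ and $x\in V\cap S^d_{n,k}$. By the $R^d_n$-traceability hypothesis, the whole line $K^d_{n,k}(x)$ lies in $V$; parametrizing it by $j\mapsto (x_1,\dots,x_{k-1},j,x_{k+1},\dots,x_d)$ for $1\le j\le n+1$ and setting $g(j) := f(x_1,\dots,x_{k-1},j,x_{k+1},\dots,x_d)$, we get $g\in\ell^2(\{1,\dots,n+1\})$ with $g(n+1)=f(x)$. Applying Lemma \ref{lem:trace1d} to $g$ gives
$$
|f(x)|^2\ \le\ \frac{2}{n}\sum_{j=1}^{n}|g(j)|^2\ +\ 2n\sum_{j=1}^{n}|g(j+1)-g(j)|^2\,.
$$
Here each $g(j)$ with $j\le n$ is a value of $f$ at a point of $V$, and each difference $g(j+1)-g(j)$ equals $f(u)-f(v)$ for an edge $\{u,v\}\subseteq V$ with $\|u-v\|_1=1$ pointing in direction $k$; both facts use $K^d_{n,k}(x)\subseteq V$.

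The only thing requiring care — and the place I expect to spend effort — is ruling out overcounting when we sum the right-hand side over $x\in V\cap S^d_{n,k}$ and then over $k$. For fixed $k$, distinct $x,x'\in V\cap S^d_{n,k}$ agree in coordinate $k$ (both equal $n+1$), hence differ in some other coordinate, so the lines $K^d_{n,k}(x)$ are pairwise disjoint; consequently the points $(x_1,\dots,j,\dots,x_d)$ with $1\le j\le n$ occurring in the first sum are distinct elements of $V$, and the direction-$k$ edges occurring in the second sum are distinct edges of the induced subgraph on $V$. So for each fixed $k$,
$$
\sum_{x\in V\cap S^d_{n,k}}|f(x)|^2\ \le\ \frac{2}{n}\,\|f\|^2_{\ell^2(V)}\ +\ 2n\!\!\sum_{e\in E_k}\!|f(e^+)-f(e^-)|^2\,,
$$
where $E_k$ is a set of distinct direction-$k$ edges of $V$. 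Summing over $k=1,\dots,d$, the first terms contribute $\frac{2d}{n}\|f\|^2_{\ell^2(V)}$; and since edges in distinct coordinate directions are distinct, $\bigcup_{k}E_k$ is a subset of all edges $\{u,v\}\subseteq V$ with $\|u-v\|_1=1$, so the second terms contribute at most $2n\sum_{\{u,v\}\subseteq V,\,\|u-v\|_1=1}|f(u)-f(v)|^2 = 2n\langle f,-\ORW(V)f\rangle_{\ell^2(V)}$ by the identity for $\langle f,-\ORW(V)f\rangle$ recorded before the theorem. Combining with the face decomposition of $V\setminus R^d_n$ yields (\ref{ineq:traced}). Thus the analytic content is entirely in Lemma \ref{lem:trace1d}, and the main obstacle is the disjointness bookkeeping just described.
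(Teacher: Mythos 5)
Your argument is correct and is essentially the paper's own proof: apply the one-dimensional Lemma \ref{lem:trace1d} along each line $K^d_{n,k}(x)$ guaranteed by $R^d_n$-traceability, then sum over $x \in V\cap S^d_{n,k}$ and $k$, using that for fixed $k$ the lines are pairwise disjoint (so vertices are counted at most $d$ times in total and each edge, lying in only one coordinate direction, at most once). The disjointness bookkeeping you spell out is exactly the content of the paper's two ``it is easy to see'' claims, so nothing further is needed.
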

\begin{proof}
By Lemma \ref{lem:trace1d}, we know that for all $x \in V \cap S^d_{n,k}$
$$
|f(x)|^2\, \leq\, \frac{2}{n} \sum_{y \in K^d_{n,k}(x) \setminus \{x\}} |f(y)|^2
+ 2n \sum_{\substack{\{y,z\} \subset K^d_{n,k}(x)\\ \|y-z\|_1=1}} |f(y)-f(z)|^2\, .
$$
It is easy to see that every $y \in R^d_{n+1}$ is in at most $d$ subsets $K^d_{n,k}(x)$,
as $k$ varies over $\{1,\dots,d\}$ and $x$ varies over $S^d_{n,k}$.
Similarly, it is easy to see that every $\{y,z\}$ with $\|y-z\|_1=1$ is a subset of at most one
$K^d_{n,k}(x)$.
Therefore, summing over all $k \in \{1,\dots,d\}$, and all $x \in V \cap S^d_{n,k}$, we obtain
the result.
\end{proof}

\section{Completion of the Proof of the Main Theorem}

\begin{lemma}
\label{lem:lip}
Given two sets, $R^d_n \subseteq V \subseteq V' \subseteq R^d_{n+1}$, such that $V'$ is $R^d_n$-traceable,
\begin{equation}
\label{ineq:gapV'V}
\grw(V')\, \geq\, \frac{\left(1 - 2d n^{-1} - |V'\setminus V| \cdot |V|^{-1}\right)\grw(V)}{1 + 2 n \grw(V)}\, .
\end{equation}
\end{lemma}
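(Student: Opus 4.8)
The plan is to bound $\grw(V')$ from below through its variational characterization, so it suffices to fix an arbitrary $f \in \ell^2(V')$ with $\langle f,1\rangle_{\ell^2(V')} = 0$ and $f \neq 0$, and to show that $\langle f, -\ORW(V') f\rangle_{\ell^2(V')}/\|f\|^2_{\ell^2(V')}$ is at least the claimed quantity; minimizing over such $f$ then yields (\ref{ineq:gapV'V}). Throughout I abbreviate $M = \|f\|^2_{\ell^2(V')}$, $E = \langle f, -\ORW(V') f\rangle_{\ell^2(V')}$, $P = \sum_{x \in V' \setminus V} |f(x)|^2$, and I write $f|_V$ for the restriction of $f$ to $V$. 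I will also use the identity recorded in the excerpt, $\langle g, -\ORW(W) g\rangle_{\ell^2(W)} = \sum_{\{x,y\}\subseteq W,\,\|x-y\|_1 = 1} |g(x) - g(y)|^2$, so that $M = \|f|_V\|^2 + P$ and $E$ is a sum of nearest-neighbor differences over $V'$.

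First I would pass from $V'$ down to $V$: since $V \subseteq V'$, every nearest-neighbor edge of $V$ is one of $V'$, hence $E \geq \langle f|_V, -\ORW(V) f|_V\rangle_{\ell^2(V)}$. Because $f|_V$ need not be orthogonal to the constants on $V$, I subtract its mean $c = |V|^{-1}\sum_{x\in V} f(x)$; using $\ORW(V)1 = 0$ and the definition of $\grw(V)$ this gives $E \geq \grw(V)\,\|f|_V - c\,1\|^2_{\ell^2(V)}$. By Pythagoras, $\|f|_V - c\,1\|^2 = \|f|_V\|^2 - |V|\,|c|^2 = (M - P) - |V|^{-1}\bigl|\sum_{x\in V} f(x)\bigr|^2$, and the constraint $\langle f,1\rangle_{\ell^2(V')} = 0$ forces $\sum_{x\in V} f(x) = -\sum_{x \in V'\setminus V} f(x)$, so Cauchy--Schwarz yields $|V|^{-1}\bigl|\sum_{x\in V}f(x)\bigr|^2 \leq (|V'\setminus V|/|V|)\,P$. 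Combining these gives
$$
E\, \geq\, \grw(V)\Bigl(M - P - \tfrac{|V'\setminus V|}{|V|}\,P\Bigr)\, .
$$

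Next I would feed in the discrete Trace theorem. Since $R^d_n \subseteq V$, we have $V' \setminus V \subseteq V' \setminus R^d_n$, so, as $V'$ is $R^d_n$-traceable, Theorem~\ref{thm:trace} gives $P \leq \tfrac{2d}{n}M + 2nE$. I would estimate the correction term $\tfrac{|V'\setminus V|}{|V|}P$ only by the trivial bound $P \leq M$, and the bare $P$ by the Trace theorem, turning the last display into
$$
E\, \geq\, \grw(V)\Bigl(\bigl(1 - \tfrac{2d}{n} - \tfrac{|V'\setminus V|}{|V|}\bigr)M - 2nE\Bigr)\, .
$$
Solving this linear inequality for $E$, and using $1 + 2n\grw(V) > 0$, gives $E/M \geq \grw(V)\bigl(1 - 2dn^{-1} - |V'\setminus V|\,|V|^{-1}\bigr)/\bigl(1 + 2n\grw(V)\bigr)$, which is exactly the bound needed; if the numerator factor happens to be nonpositive the desired inequality is trivial since $\grw(V') \geq 0$.

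The one place that requires care — the ``hard part'' — is the bookkeeping of the two correction terms. If one also estimates $\tfrac{|V'\setminus V|}{|V|}P$ via the Trace theorem rather than by $P \le M$, the resulting denominator becomes $1 + 2n\bigl(1 + |V'\setminus V|/|V|\bigr)\grw(V)$, which is strictly weaker than the stated bound; applying the crude estimate $P \le M$ precisely on that term is what keeps the denominator equal to $1 + 2n\grw(V)$. Everything else — the edge-monotonicity of the Dirichlet form, the mean-subtraction on $V$, and the final linear algebra — is routine.
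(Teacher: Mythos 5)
Your proposal is correct and follows essentially the same route as the paper's proof: edge-monotonicity of the Dirichlet form to pass from $V'$ down to $V$, Rayleigh--Ritz after removing the mean on $V$, Cauchy--Schwarz via the orthogonality constraint on $V'$, the discrete Trace theorem (Theorem \ref{thm:trace}) to absorb the boundary mass $\sum_{x\in V'\setminus V}|f(x)|^2$, and solving the resulting linear inequality to produce the denominator $1+2n\,\grw(V)$. The only (harmless) difference is that you work with an arbitrary admissible test function and solve for the Rayleigh quotient, whereas the paper takes $f$ to be the minimizing eigenfunction so that the Trace-theorem term becomes $2n\,\grw(V')\,\|f\|^2$ directly; the bookkeeping is otherwise identical.
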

We wish to remark on this inequality. We are thinking of the case that $V$ is actually $R^d_n$ so that we can easily calculate $\grw(V)$.
We actually want lower bounds on the spectral gap $\grw(V')$, as we have written it.
The reason is that we can easily obtain upper bounds for $\grw(V')$ due to the standard Rayleigh-Ritz variational formula, by trying a good variational
formula.
But lower bounds are harder to obtain.

\begin{proof}
The spectral gap is defined as in (\ref{eq:gapRW}), which we rewrite as 
$$
\grw(V')\, =\, 
\min\{ \langle f,-\ORW(V') f\rangle_{\ell^2(V')}\, :\, f \in \ell^2(V')\, ,\ \|f\|_{\ell^2(V')} = 1\ \text{and}\ \langle f,1 \rangle_{\ell^2(V')} = 0\}\, .
$$
But, actually, the minimum is attained at an eigenvector $f$ such that $-\ORW(V') f = \grw(V') f$.
Letting $f$ be this vector, we have
$$
\grw(V')\, =\, \frac{\langle f,-\ORW(V') f\rangle_{\ell^2(V')}}{\|f\|^2_{\ell^2(V')}}\, .
$$
Since $V \subseteq V'$, we can see that $-\ORW(V) \leq -\ORW(V')$, where $-\ORW(V)$ is extended to $\ell^2(V')$ naturally,
considering $V$ as a subgraph of $V'$.
This is the key to various inequalities: the extra terms contained in $[-\ORW(V')] - [-\ORW(V)]$ may all be expressed as positive semi-definite matrices.
Hence,
$$
\grw(V')\, =\,  \frac{\langle f,-\ORW(V') f\rangle_{\ell^2(V')}}{\|f\|^2_{\ell^2(V')}}\, \geq\,  \frac{\langle f,-\ORW(V) f\rangle_{\ell^2(V')}}{\|f\|^2_{\ell^2(V')}}\, .
$$
Let us denote $f\restriction V$ as $f_V$, in order to rewrite 
$$
\langle f,-\ORW(V) f\rangle_{\ell^2(V')}\, =\, \langle f_V, -\ORW(V) f_V\rangle_{\ell^2(V)}\, .
$$
Then, using the Rayleigh-Ritz mini-max definition of $\grw(V)$ as the spectral gap, the smallest eigenvalue of $-\ORW(V)$ among the subspace of vectors orthogonal to 1,
we obtain
$$
\langle f_V, -\ORW(V)\,  f_V\rangle_{\ell^2(V)}\,
\geq\, \left[\|f_V\|_{\ell^2(V)}^2 - \frac{|\langle f_V, 1\rangle_{\ell^2(V)}|^2}{\|1\|_{\ell^2(V)}^2}\right]\, \grw(V)\, .
$$
Denoting $f_{V'\setminus V}$ for $f  \restriction (V'\setminus V)$, we also have
$$
\|f_V\|_{\ell^2(V)}^2\, =\, \|f\|_{\ell^2(V')}^2 - \|f_{V'\setminus V}\|_{\ell^2(V'\setminus V)}^2\, .
$$
%
Finally, we can write $\langle f_V, 1\rangle_{\ell^2(V)}$ as $\langle f, 1_V\rangle_{\ell^2(V')}$, and we know $\|1\|_{\ell^2(V)}^2 = |V|$.
So, we have
\begin{equation}
\label{eq:additional}
\grw(V')\, \geq\,
\left(1 -  \frac{ \|f_{V'\setminus V}\|_{\ell^2(V'\setminus V)}^2}{\|f\|^2_{\ell^2(V')}} 
- |V|^{-1} \frac{|\langle f, 1_V\rangle_{\ell^2(V')}|^2}{\|f\|^2_{\ell^2(V')}}\right) \grw(V)\, .
\end{equation}
But, since $\langle f,1 \rangle_{\ell^2(V)}=0$, we have
$$
\langle f,1_{V} \rangle_{\ell^2(V')}\, 
=\, \langle f,1 \rangle_{\ell^2(V')} - \langle f,1_{V'\setminus V}\rangle_{\ell^2(V')}\,
=\, -\langle f,1_{V'\setminus V}\rangle_{\ell^2(V')}\, .
$$
So, by the Cauchy-Schwarz inequality,
$$
|\langle f,1_{V} \rangle_{\ell^2(V')}|^2\, \leq\, \|f\|_{\ell^2(V')}^2 \|1_{V'\setminus V}\|_{\ell^2(V')}^2\, =\, |V'\setminus V| \cdot \|f\|_{\ell^2(V')}^2\, .
$$
Putting this in (\ref{eq:additional}), we have
$$
\grw(V')\, \geq\,
\left(1 -  \frac{ \|f_{V'\setminus V}\|_{\ell^2(V'\setminus V)}^2}{\|f\|^2_{\ell^2(V')}} 
- \frac{|V'\setminus V|}{|V|}\right) \grw(V)\, .
$$
Finally, we notice that, by Theorem \ref{thm:trace},
$$
\|f_{V'\setminus V}\|_{\ell^2(V'\setminus V)}^2\, =\, \sum_{x \in V'\setminus V} |f(x)|^2\, \leq\, \sum_{x \in V' \setminus R^d_n} |f(x)|^2\,
\leq\, \frac{2d}{n} \|f\|_{\ell^2(V')}^2 + 2n \langle f,-\ORW(V)f\rangle_{\ell^2(V')}\, .
$$
Since $f$ is an eigenfunction and $-\ORW(V') f = \grw(V') f$, this gives
$$
\|f_{V'\setminus V}\|_{\ell^2(V'\setminus V)}^2\, 
\leq\, \big(2dn^{-1} + 2n \grw(V)\big) \|f\|^2_{\ell^2(V')}\, .
$$
Putting this together with the last inequality for $\grw(V')$, we obtain
\begin{equation}
\label{ineq:gapVV'}
\grw(V')\, \geq\, \big(1 - 2dn^{-1} - 2n \grw(V') - |V'\setminus V| \cdot |V|^{-1}\big) \grw(V)\, .
\end{equation}
Solving this inequality for $\grw(V')$ gives the desired result.
\end{proof}

We can now prove the main theorem

\begin{proofof}{\bf Proof of Theorem \ref{thm:main}:}
Firstly note that we can choose an infinite sequence of subsets $V_2 \subset V_3 \subset \dots \subset \Z^d$,
such that $|V_N|=N$, and $|V_{N+1}\setminus V_N|=1$, and such that for every $n\geq 1$ and every $N \in \{n^d,n^d+1,\dots,(n+1)^d\}$
we have that $V_N \subseteq R^d_{n+1}$ and $V_N$ is $R^d_n$-traceable.
Implicit in this is the fact that $V_{n^d} = R^d_n$ for each $n=2,3,\dots$.
To see this, suppose that $n^d\leq V_N<(n+1)^d$ and that $V_N$ is $R^d_n$-traceable.
Also suppose that $R^d_n \subseteq V_N$.
Then, if one appends any vertex in $S^d_{n,k}$, not already in $V_N$, with minimal $k$, then $V_{N+1}$ will also be $R^d_n$-traceable.

Let us suppose that $V_N = \{x_1,\dots,x_N\}$, where $V_{N+1} \setminus V_N = \{x_{N+1}\}$, for each $N\geq 1$.
Then it is easy to see that $q_2,q_3,\dots$ is an increasing sequence of rate functions, where
$$
q_N(\{j,k\})\, =\, \begin{cases} 1 & \text{if $\|x_j-x_k\|_1=1$,}\\ 0 & \text{otherwise.}\end{cases}
$$
So, if we prove that $\grw(V_N) \sim C N^{-p}$ for some $C<\infty$ and $p>0$, then we can apply Proposition
\ref{prop:power}.
We will prove this, next, with $C = \pi^2$ and $p=2/d$.

For each $n\geq 1$ it is trivial to diagonalize $\ORW(R^d_n)$, using one of the standard discrete Fourier bases.
All the eigenvectors are of the form
$$
f(x)\, =\, f^{(d)}(x;k)\, :=\, \prod_{i=1}^{d} f^{(1)}(x_i;k_i)\, ,
$$
where $x = (x_1,\dots,x_d)$ is a point in $R^d_n$ and $k=(k_1,\dots,k_d) \in \{0,1,\dots,n-1\}^d$.
We have
$$
f^{(1)}(x_i;k_i)\, =\, \begin{cases} 
n^{-1/2} & \text{ if $k_i=0$, and}\\
(2/n)^{1/2} \cos(\pi k_i (x_i - \frac{1}{2})/n) & \text{ if $k_i=1,\dots,n-1$.}
\end{cases}
$$
The eigenvalue is 
$$
\lambda^{(d)}(k)\, =\, \sum_{i=1}^{d} \lambda^{(1)}(k_i)\, ,\quad \text{where}\quad
\lambda^{(1)}(k_i)\, =\, -4 \sin^2\left(\frac{\pi k_i}{2n}\right)\, .
$$
There are various ways to see that this calculation is correct. The simplest is to notice that
$f(x)$ can be extended to all of $\Z^d$, and the choice of $k$'s (as well as the fact that we took cosine
and not sine) is just what is required to insure that the contributions to the discrete Laplacian across the boundary
of $R^d_n$ are all zero.
One can also simply check that each of the $f_k(x)$'s are eigenvectors (most simply using the comment from the last sentence)
and that they form an orthonormal basis for $\ell^2(R^d_n)$.
Because of all this, we see that 
$$
\grw(R^d_n)\, =\, 4 \sin^2\big(\pi/(2n)\big)\, ,
$$
which is equal to $\lambda^{(d)}(k)$ for any of the $k$'s with $k_i=1$ for some $i \in \{1,\dots,d\}$
and $k_j=0$ for all $j \in \{1,\dots,i-1,i+1,\dots,d\}$.
Therefore, obviously, $\grw(R^d_n) \sim \pi^2 n^{-2}$.
Or, in other words, $\grw(R^d_n) \sim \pi^2 |R^d_n|^{-2/d}$, since $|R^d_n|=n^d$.

Now, suppose that $n\geq 1$ and that $n^d \leq N\leq (n+1)^d$.
As before, we suppose that $V_N$ is $R^d_n$-traceable.
Also, as stated before, we assume $R^d_n \subseteq V_N$.
Then, by Theorem \ref{thm:trace}, we see that
$$
\grw(V_N)\, \geq\, \frac{(1-2dn^{-1} - |V_N\setminus R^d_n| n^{-d}) \grw(R^d_n)}{1+2n \grw(R^d_n)}\, ,
$$
where we took $V=R^d_n$ and $V'=V_N$.
But, since $\grw(R^d_n) \leq \pi^2/n^2$, we then obtain
$$
\grw(V_N)\, \geq\, \frac{(1-2dn^{-1} - (2^d-1)n^{-1})}{1+2\pi^2n^{-1}}\, \grw(R^d_n)\, ,
$$
where we used the crude bound $|V_N\setminus R^d_n|n^{-d} \leq (1+n^{-1})^d -1\leq (2^d-1) n^{-1}$.

But, if we again appeal to Theorem \ref{thm:trace}, this time using $V=V_N$ and $V'=R^d_{n+1}$, then we see that
$$
\grw(V_N) \cdot \big(1-2dn^{-1} -2n\grw(R^d_{n+1})-|R^d_{n+1}\setminus V_N| \cdot |V_N|^{-1}\big)\, \leq\, \grw(R^d_{n+1})\, .
$$
Indeed, this follows directly from equation (\ref{ineq:gapVV'}).
So, using bounds similar to those just used, we obtain
$$
\grw(V_N)\, \leq\, \big(1-[2d + 2 \pi^2 + (2^d-1)]n^{-1}\big)^{-1}\, \grw(R^d_{n+1})\, .
$$
Putting the upper and lower bounds together, and using the fact that $\grw(R^d_n) \sim \pi^2 n^{-2} = \pi^2 |R^d_n|^{-2/d}$, it obviously follows that
$$
\grw(V_N)\, \sim\, \pi^2 N^{-2/d}\, ,
$$
as desired.
So applying Proposition \ref{prop:power}, the main theorem follows.
\end{proofof}

\section*{Acknowledgements}

This research was supported in part by a U.S.\ National Science Foundation
grant, DMS-0706927.
S.S.\ is grateful to Bruno Nachtergaele and Wolfgang Spitzer for useful
comments and helpful conversations,
and to Thomas M.~Liggett for discussions on Aldous's conjecture and especially for 
bringing the paper of Handjani and Jungreis to his attention.
He is grateful to Brian Rider for explaining some aspects of reference \cite{RamirezRiderVirag}.
We thank Gady Kozma for pointing out an error in our original proof of Lemma 5.1, and we are grateful to
an anonymous referee for many helpful corrections based on a careful reading.

\baselineskip=12pt

\end{document}